\definecolor{vegasgold}{rgb}{0.77, 0.7, 0.35}
\definecolor{darkgoldenrod}{rgb}{0.72, 0.53, 0.04}
\definecolor{gold(metallic)}{rgb}{0.83, 0.69, 0.22}
\newtheorem{lthm}{Theorem}
\DeclareFontFamily{U}{wncy}{}
\DeclareFontShape{U}{wncy}{m}{n}{<->wncyr10}{}
\DeclareSymbolFont{mcy}{U}{wncy}{m}{n}
\DeclareMathSymbol{\Sh}{\mathord}{mcy}{"58}
\newtheorem{theorem}{Theorem}[section]
\newtheorem{lemma}[theorem]{Lemma}
\newtheorem*{theorem*}{Theorem}
\newtheorem*{ass*}{Assumption}
\newtheorem{definition}[theorem]{Definition}
\newtheorem{heuristic}[theorem]{Heuristic}
\newtheorem{remark}[theorem]{Remark}
\newtheorem{conjecture}[theorem]{Conjecture}
\newtheorem{proposition}[theorem]{Proposition}
\newcommand{\cH}{\mathcal{H}}
\newcommand{\Z}{\mathbb{Z}}
\newcommand{\Q}{\mathbb{Q}}
\newcommand{\F}{\mathbb{F}}
\newcommand{\cC}{\mathcal{C}}
\newcommand{\N}{\mathbb{N}}
\newcommand{\cyc}{\mathrm{cyc}}
\newcommand{\Prob}{\mathbb{P}}
\newcommand{\op}[1]{\operatorname{#1}}
 \DeclareMathSymbol{\sha}{\mathord}{mcy}{"58}
\newcommand{\mylabel}[2]{#2\def\@currentlabel{#2}\label{#1}}
\numberwithin{equation}{section}
\begin{document}

\title[]{A Heuristic approach to the Iwasawa theory of elliptic curves}
\author[K.~M\"uller]{Katharina M\"uller}
\address[Müller]{Institut für Theoretische Informatik, Mathematik und Operations Research, Universität der Bundeswehr München, Werner-Heisenberg-Weg 39, 85577 Neubiberg, Germany}
\email{katharina.mueller@unibw.de}

\author[A.~Ray]{Anwesh Ray}
\address[Ray]{Chennai Mathematical Institute, H1, SIPCOT IT Park, Kelambakkam, Siruseri, Tamil Nadu 603103, India}
\email{anwesh@cmi.ac.in}

\keywords{}
\subjclass[2020]{}

\maketitle

\begin{abstract}
Let $E_{/\mathbb{Q}}$ be an elliptic curve and $p$ an odd prime such that $E$ has good ordinary reduction at $p$ and the Galois representation on $E[p]$ is irreducible. Then Greenberg's $\mu=0$ conjecture predicts that the Selmer group of $E$ over the cyclotomic $\mathbb{Z}_p$-extension of $\Q$ is cofinitely generated as a $\mathbb{Z}_p$-module. In this article we study this conjecture from a statistical perspective. We extend the heuristics of Poonen and Rains to obtain further evidence for Greenberg's conjecture. The key idea is that the vanishing of the $\mu$-invariant can be detected by the intersection $M_1\cap M_2$ of two Iwasawa modules $M_1, M_2$ with additional properties in a given inner product space. The heuristic is based on showing that there is a probability measure on the space of pairs $(M_1, M_2)$, with respect to which the event that $M_1\cap M_2$ is finite happens with probability $1$.
\end{abstract}

\section{Introduction}
\subsection{Background and motivation} Let \( p \) be a prime and \( K \) a number field. The \emph{cyclotomic \( \mathbb{Z}_p \)-extension} \( K_{\operatorname{cyc}} \) is the unique \( \mathbb{Z}_p \)-extension of \( K \) contained in \( K(\mu_{p^\infty}) \). Let \( K_n/K \) be the subextension with \( [K_n:K] = p^n \) and \( h_p(K_n) \) the \( p \)-part of its class number, so \( h_p(K_n) = p^{e_n} \). Iwasawa proved that for large \( n \),  
\[
e_n = p^n \mu_p(K) + n \lambda_p(K) + \nu_p(K),
\]
where \( \mu_p(K), \lambda_p(K) \in \mathbb{Z}_{\geq 0} \) and \( \nu_p(K) \in \mathbb{Z} \). He conjectured \( \mu_p(K) = 0 \) for all \( K \) and \( p \), later proven by Ferrero and Washington for abelian extensions of \( \mathbb{Q} \).

Elliptic curves, fundamental in arithmetic geometry, are smooth projective genus-one curves with a distinguished point. For a number field \( K \), the group of \( K \)-rational points \( E(K) \) is finitely generated by the Mordell--Weil theorem, decomposing into a torsion subgroup and a free part of rank \( r \), the Mordell--Weil rank. This rank is central to the study of elliptic curves and relates to the arithmetic of Selmer groups \( \operatorname{Sel}_p(E/K) \), whose dimension gives an upper bound on \( r \). In Iwasawa theory, elliptic curves are studied over \( \mathbb{Z}_p \)-extensions. For \( E/\mathbb{Q} \) and a prime \( p \) where \( E \) has good ordinary reduction, the \( p \)-primary Selmer group \( \operatorname{Sel}_{p^\infty}(E/\mathbb{Q}_{\operatorname{cyc}}) \) is conjectured to be cofinitely generated and torsion over \( \mathbb{Z}_p[[T]] \).
Kato \cite{kato} proved that \( \operatorname{Sel}_{p^\infty}(E/\mathbb{Q}_{\operatorname{cyc}}) \) is a cofinitely generated and cotorsion module over the Iwasawa algebra. The $\mu$-invariant of the Selmer group is $0$ if it is cofinitely generated as a $\Z_p$-module. Let $E[p]$ be the $p$-torsion subgroup of $E(\bar{\Q})$ and 
\[\rho_{E, p}: \op{Gal}(\bar{\Q}/\Q)\rightarrow \op{Aut}(E[p])\xrightarrow{\sim} \op{GL}_2(\F_p)\] be the Galois representation on $E[p]$, also called the \emph{residual representation}. The following conjecture of Greenberg will be the primary focus of this article. 
\begin{conjecture}
    Let $p$ be an odd prime number and $E$ be an elliptic curve over $\Q$. Assume that $E$ has good ordinary reduction at $p$ and that $\rho_{E,p}$ is irreducible. Then, the $p$-primary Selmer group $\op{Sel}_{p^\infty}(E/\Q_{\op{cyc}})$ is a cofinitely generated $\Z_p$-module. 
\end{conjecture}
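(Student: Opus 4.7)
The plan is to attack the conjecture through the strategy of congruences between modular forms, combining Kato's divisibility in the Iwasawa main conjecture with the Greenberg--Vatsal philosophy that $\mu$-invariants are controlled by the residual representation. By modularity, $E$ corresponds to a weight-$2$ newform $f_E$, and $\op{Sel}_{p^\infty}(E/\Q_\cyc)^\vee$ is a finitely generated torsion module over the Iwasawa algebra $\Lambda = \Z_p\lb T\rb$ whose Pontryagin dual we denote $X_\infty$. The assertion $\mu(X_\infty) = 0$ is then equivalent to showing that $X_\infty$ is finitely generated as a $\Z_p$-module.

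First, I would invoke Kato's divisibility: $\op{char}_\Lambda(X_\infty) \mid (L_p(f_E))$ in $\Lambda$, where $L_p(f_E)$ is the $p$-adic $L$-function attached to $f_E$ via the Mazur--Swinnerton-Dyer construction (well-defined up to units by the ordinary hypothesis). This yields the inequality $\mu(X_\infty) \leq \mu(L_p(f_E))$, so the conjecture reduces to the analytic statement $\mu(L_p(f_E)) = 0$. Next, I would use the Greenberg--Vatsal congruence result (extended by Emerton--Pollack--Weston via the Taylor--Wiles--Kisin method) to show that both $\mu(X_\infty)$ and $\mu(L_p(f_E))$ depend only on the isomorphism class of $\barrho = \rho_{E,p}$ within the ordinary Hida family, so it suffices to exhibit, for each irreducible residual $\barrho$, a single $p$-ordinary newform $g$ with $\barrho_g \simeq \barrho$ for which $\mu(L_p(g)) = 0$.

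The hard part is this last existence statement. The natural approach is to run a level-raising argument: starting from $f_E$ of conductor $N$, raise the level by primes $\ell \equiv \pm 1 \pmod{p}$ with $a_\ell(f_E) \equiv \pm(\ell+1) \pmod{p}$ until one reaches a congruent form $g$ to which one of the known vanishing results applies --- for example, a form whose Iwasawa invariants can be computed from an Euler system of Beilinson--Flach or Heegner-type elements, or a form on a Shimura curve where the Jochnowitz congruence transfers the problem to an anticyclotomic setting in which Vatsal's theorem forces $\mu=0$. Alternatively, one can try to pick $g$ so that $L_p(g)$ specializes at the trivial character to a $p$-adic unit (an $L$-value nonvanishing mod $p$), which by the Weierstrass preparation theorem forces $\mu(L_p(g)) = 0$.

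The principal obstacle, and the reason the conjecture remains open, is that no general mechanism is known for producing such a congruent form $g$ in every residual class of irreducible $\barrho$. For representations with large image one can often arrange nonvanishing of a suitable twisted $L$-value modulo $p$ via equidistribution or Jacquet--Langlands transfer, but for small-image or exceptional residual types the required nonvanishing statements are themselves conjectural. A complete proof would therefore seem to require either a new general nonvanishing theorem for $L$-values modulo $p$ across Hida families, or a direct Galois-cohomological argument bounding $X_\infty/\varpi X_\infty$ uniformly --- precisely the kind of statement that the heuristic framework developed in this paper is designed to render plausible, rather than to prove.
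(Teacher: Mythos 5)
The statement you are addressing is Greenberg's conjecture itself, and the paper does not prove it --- nor claim to. The paper's contribution is heuristic: it shows that $\mu_p(E)=0$ is (essentially, modulo Conjecture A of Coates--Sujatha) equivalent to finiteness of the intersection $V_1(E)\cap V_2(E)$ of two $\Omega$-corank-one submodules of a corank-two module with a natural pairing (Lemmas \ref{V1 is maxl iso}--\ref{boring lemma 1}, Proposition \ref{prop}), and then proves (Theorem \ref{thm:probability}) that for a natural probability measure on pairs of such submodules the intersection is finite with probability $1$. So there is no proof in the paper against which to match yours; any complete argument would be a major new theorem. Your proposal takes a genuinely different, classical route --- Kato's divisibility plus the Greenberg--Vatsal/Emerton--Pollack--Weston propagation of $\mu$-invariants along congruences --- but as you yourself concede, it terminates in an unproved existence statement: producing, for every irreducible residual representation $\barrho$, a single $p$-ordinary congruent form $g$ with $\mu(L_p(g))=0$. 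That is precisely where the analytic $\mu=0$ conjecture lives, so what you have is a (known) reduction of the algebraic conjecture to the analytic one together with a representative-existence problem, not a proof.

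Two further technical points weaken the intermediate reductions as stated. First, Kato's integral divisibility $\op{char}_\Lambda(X_\infty)\mid (L_p(f_E))$ in $\Lambda$ (rather than in $\Lambda\otimes\Q_p$) requires hypotheses stronger than irreducibility of $\rho_{E,p}$ --- typically a large-image condition --- and without integrality the divisibility carries no information about $\mu$ at all, since inverting $p$ kills the $\mu$-invariant. Second, the Greenberg--Vatsal/EPW results do not say that $\mu(X_\infty)$ and $\mu(L_p)$ ``depend only on $\barrho$''; they say that \emph{if} $\mu=0$ holds for one member of the congruence class (and under suitable hypotheses on $\barrho$ and the local behaviour at bad primes), then it holds for all members, with $\lambda$ varying in a controlled way. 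So the propagation step is conditional on exactly the vanishing you are trying to establish, and the level-raising/Jochnowitz/Vatsal-type inputs you invoke are anticyclotomic and do not directly yield cyclotomic $\mu(L_p(g))=0$ in every residual class. The honest conclusion of your sketch --- that the missing nonvanishing-mod-$p$ mechanism is the open heart of the problem --- is correct, and it is this gap that the paper's heuristic framework is designed to make plausible rather than to close.
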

\par A theorem of Duke \cite{Dukeexceptional} states that a density $1$ set of such elliptic curves have the property that $\rho_{E, p}$ is surjective. Greenberg's conjecture thus has the following conjectural implication for elliptic curves on average.

\begin{conjecture}[$\mu=0$ on average]\label{mu=0 on average}
    Let $p$ be a fixed odd prime number. Then, for most elliptic curves $E_{/\Q}$ with good ordinary reduction at $p$, the Selmer group $\op{Sel}_{p^\infty}(E/\Q_{\op{cyc}})$ is cofinitely generated over $\Z_p$.
\end{conjecture}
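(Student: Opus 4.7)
Since the statement is statistical in nature, the plan is to construct a probabilistic model in the spirit of Poonen--Rains, adapted to Iwasawa theory, whose prediction matches the conjecture. The first step is to realize the Pontryagin dual of $\op{Sel}_{p^\infty}(E/\Q_{\op{cyc}})$, via Poitou--Tate duality, as the intersection $M_1\cap M_2$ of two maximal isotropic submodules of an ambient $\Lambda$-module $V$ equipped with a $\Lambda$-bilinear pairing, where $\Lambda=\Z_p\lb T\rb$. Here $M_1$ is the image of the global Iwasawa cohomology $H^1_{\op{Iw}}(\Q_S/\Q_{\op{cyc}},T_pE)$ in the sum of local Iwasawa cohomologies at the primes of a finite set $S$ containing $p$, the bad primes of $E$, and $\infty$, while $M_2$ is the sum of local Selmer conditions---the formal group filtration at $p$, and unramified classes at the remaining primes. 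A standard Nakayama argument then shows that $\mu\bigl(\op{Sel}_{p^\infty}(E/\Q_{\op{cyc}})\bigr)=0$ if and only if $M_1\cap M_2$ is finite as a $\Z_p$-module.

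Next I would define a probability measure on the space of pairs $(M_1,M_2)$ of maximal isotropic $\Lambda$-submodules of a fixed ambient pairing space by transferring the Haar measure on the $\Lambda$-isometry group of $V$ to the orbit of maximal isotropic submodules; this directly lifts the Poonen--Rains measure on maximal isotropic $\F_p$-subspaces to the Iwasawa setting. The key computation is to show that $M_1\cap M_2$ is finite with probability one. I would reduce this to the mod-$p$ statement $\overline{M_1}\cap\overline{M_2}=0$ inside the residual quadratic $\F_p$-space, and then invoke the Poonen--Rains count, which shows that the probability that two random maximal isotropic subspaces of a $2n$-dimensional non-degenerate quadratic space over $\F_p$ intersect non-trivially tends to zero as $n\to\infty$. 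In the Iwasawa setting the ambient $\Lambda$-rank plays the role of $n$ and grows along the family, so the trivial-intersection event has probability one in the limit; a final Nakayama lift yields finiteness of $M_1\cap M_2$ over $\Z_p$, and hence $\mu=0$.

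The main obstacle is the arithmetic-to-model step: justifying that, as $E$ ranges over the density-one subfamily of good ordinary curves with surjective residual representation guaranteed by Duke's theorem, the pairs $(M_1,M_2)$ attached to $E$ are genuinely equidistributed with respect to the measure described above. This is the universal difficulty of Poonen--Rains style heuristics, and in the Iwasawa setting it is compounded by the fact that the space of maximal isotropic $\Lambda$-submodules is profinite, so equidistribution must be phrased and checked compatibly at every finite level $\Lambda/(p,T^n)$. A secondary but more tractable obstacle is verifying the $\Lambda$-maximal isotropy of $M_1$ and $M_2$ for the relevant Iwasawa-theoretic Poitou--Tate pairing, and in particular identifying the precise pairing space that makes both isotropy and the rank growth above uniform as $E$ varies.
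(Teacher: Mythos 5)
Your overall strategy---realize the (dual) Selmer group inside a semi-local pairing space as an intersection of a global image with a local-condition submodule, and argue that a random such intersection is small---is the same in spirit as the paper's, but two of your specific steps would fail as stated. First, the asserted equivalence ``$\mu=0$ if and only if $M_1\cap M_2$ is finite'' is not unconditional. Because your $M_1$ is the \emph{image} of global (Iwasawa) cohomology in the semi-local module, the kernel of the localization map---which is precisely the residual fine Selmer group $R(E[p]/\Q_{\op{cyc}})$---never enters the intersection: one only obtains $\op{Sel}^{\op{Gr}}(E[p]/\Q_{\op{cyc}})/R(E[p]/\Q_{\op{cyc}})\simeq V_1(E)\cap V_2(E)$. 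Hence finiteness of the intersection gives $\mu_p(E)=0$ only in conjunction with the vanishing of the $\mu$-invariant of the fine Selmer group, i.e.\ Conjecture A of Coates--Sujatha; the paper is explicit about this conditionality (Lemma \ref{boring lemma 1} and Proposition \ref{prop}(2)), and no ``standard Nakayama argument'' removes it.

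Second, your mechanism for the probability-one statement is the wrong one. You want the ambient rank to grow along the family so that the Poonen--Rains count for maximal isotropics in a $2n$-dimensional $\F_p$-space applies with $n\to\infty$. But over $\Q_{\op{cyc}}$ the local terms at $\ell\neq p$ are finite for $E[p]$-coefficients (Lemma \ref{lem:local}), so the ambient module has \emph{fixed} $\Omega$-corank $2$, coming from the prime $p$ alone; it does not grow with the conductor, and a maximal isotropic's free part necessarily lives in that fixed corank-$2$ summand (Lemma \ref{str of maxl iso}). With a fixed small space, the finite-dimensional count gives a probability bounded away from $1$, not tending to $1$. The paper's heuristic instead works at fixed corank $2$ over $\Omega=\F_p\llbracket T\rrbracket$ and exploits the $T$-adic depth: dually, $M_1,M_2$ correspond to maximal cyclic submodules $N_1,N_2\subset\Omega^2$; two such submodules either coincide or meet trivially (Lemma \ref{max-cyclic}), and the probability that they agree modulo $T^n$ under the level-$n$ uniform measure is $1/\bigl((p+1)p^{n-1}\bigr)\to 0$ (Theorem \ref{thm:probability}), with the event handled via an outer measure since it need not be measurable in the inverse-limit $\sigma$-algebra. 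Also note your proposed reduction ``mod $p$ then lift by Nakayama'' is delicate, since intersections do not commute with reduction; the paper avoids this by working with the residual Selmer group from the start. Finally, keep in mind the statement is a conjecture: like you, the paper assumes (and does not prove) equidistribution of the pairs as $E$ varies, so at best one obtains heuristic evidence, not a proof.
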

\par Heuristic reasoning has been profoundly influential in the field of arithmetic statistics, offering insightful conjectures and guiding intuition. A cornerstone in this domain is the Cohen-Lenstra heuristics, introduced by Cohen and Lenstra in \cite{CLheuristics}, which propose a probabilistic model for the distribution of class groups of quadratic number fields. Passing from number fields $K$ and their $p$-class groups $\op{Cl}_p(K)$ to Selmer groups of elliptic curves $E$ over $\Q$, Heath--Brown \cite{HeathBrown1, HeathBrown2}, Swinnerton--Dyer \cite{Swinnerton-Dyer} and Kane \cite{Kane} obtained the distribution for the $\F_2$-dimension of the $2$-Selmer group of $E$. Poonen and Rains \cite{poonen} studied heuristics for the distribution of $p$-Selmer group of an elliptic curve over $\Q$ to the intersection of two maximal isotropic subspaces in an inner product space. Inspired by such developments, we provide further evidence for Conjecture \ref{mu=0 on average} via a heuristic approach. Assuming that such intersections are suitably random, the average size of the Selmer groups in question can thus be predicted. This heuristic leads to the following predictions, cf. \cite[Conjecture 1.1 and 1.2]{poonen}:
\begin{enumerate}
    \item $\# \op{Sel}_p(E/\Q)$ has average size $(p+1)$ for any prime $p$.
    \item Asymptotically, $1/2$ of elliptic curves over $\Q$ have rank $0$ and $1/2$ have rank $1$.
    \item As $E$ varies over all elliptic curves over $\Q$, 
    \[\op{Prob}\left(\dim_{\F_p} \op{Sel}_p(E/\Q)=d\right)=\prod_{j\geq 0} \left(1+\frac{1}{p^j}\right)\times \prod_{j=1}^d \left(\frac{p}{p^j-1}\right). \]
\end{enumerate}
When $p\leq 5$, part (1) of the above conjecture was proven by Bhargava and Shankar \cite{BSAnnals1, BSAnnals2, BS5Selmer}. Moreover, they show that there is a positive density of elliptic curves $E_{/\Q}$ with Mordell--Weil rank $0$. Bhargava and Skinner \cite{BhargavaSkinner} prove that a positive density of elliptic curves $E_{/\Q}$ have rank $1$.
In this article, we explore the $\mu = 0$ conjecture, particularly in an "average" sense, by extending the heuristic of Poonen and Rains. The $\mu$-invariant of an elliptic curve $ E/\mathbb{Q} $ is $0$ if and only if the \emph{residual Greenberg Selmer group} $ \operatorname{Sel}^{\operatorname{Gr}}(E[p]/\mathbb{Q}_{\operatorname{cyc}}) $ (cf. \eqref{residual greenberg selmer group}) is finite. Let $ \Lambda $ denote the \textit{Iwasawa algebra} of $ \Gamma := \operatorname{Gal}(\mathbb{Q}_{\operatorname{cyc}}/\mathbb{Q}) $, which is a formal power series ring. We consider the quotient algebra $ \Omega := \Lambda/(p) $, which can be identified with the ring of formal power series $ \mathbb{F}_p\llbracket T \rrbracket $. The structure of the Selmer group $ \operatorname{Sel}_{p^\infty}(E/\mathbb{Q}_{\operatorname{cyc}}) $ is closely related to the \emph{fine Selmer group} $ R_{p^\infty}(E/\mathbb{Q}_{\operatorname{cyc}}) $. The fine Selmer group is defined using more stringent local conditions and plays a key role in Iwasawa theory.
\par A conjecture due to Coates and Sujatha (cf. \cite[Conjecture A]{CoatesSujathaMathAnnalen}) predicts that $ R_{p^\infty}(E/\mathbb{Q}_{\operatorname{cyc}}) $ is cofinitely generated as a $ \mathbb{Z}_p $-module for all elliptic curves over $ \mathbb{Q} $ with good reduction at $ p $. This conjecture is seen to follow as a consequence of Iwasawa's classical $\mu=0$ conjecture (see \cite[Corollary 3.5]{CoatesSujathaMathAnnalen}). The vanishing of the $\mu$-invariant of $ R_{p^\infty}(E/\mathbb{Q}_{\operatorname{cyc}}) $ is captured by the residual fine Selmer group $ R(E[p]/\mathbb{Q}_{\operatorname{cyc}}) $. This is a subgroup of the residual Greenberg Selmer group, which is finite if and only if the $ \mu $-invariant of $ R_{p^\infty}(E/\mathbb{Q}_{\operatorname{cyc}}) $ is $0$ (cf. Proposition \ref{residual fine selmer finite implies conjecture A} for further details).
\subsection{A heuristic for the vanishing of the $\mu$-invariant on average}
\par We describe the steps leading up to our heuristic.
\begin{description}
    \item[Step 1] Let $E$ be an elliptic curve over $\Q$ with good ordinary reduction at $p$. Define $\Pi_E$ to be the quotient of residual Selmer groups $ \Pi_E := \frac{\operatorname{Sel}^{\operatorname{Gr}}(E[p]/\mathbb{Q}_{\operatorname{cyc}})}{R(E[p]/\mathbb{Q}_{\operatorname{cyc}})} $. If $\Pi_E$ is finite, then the Conjecture A of Coates and Sujatha implies that the $\mu$-invariant of  $\op{Sel}_{p^\infty}(E/\Q_{\op{cyc}})$ vanishes.
    \item[Step 2] Let $\Sigma$ be the set of primes $\ell$ such that either $\ell\neq p$ is a prime at which $E$ has bad reduction, or, $\ell=p$. Then we set 
\[V(E):=\bigoplus_{\ell\in \Sigma} \left(\bigoplus_{v|\ell} H^1(\Q_{\op{cyc}, v}, E[p]) \right),\] see \eqref{defn of V(E)}. The $\Omega$-module $V(E)$ comes equipped with a pairing 
\[(,): V(E)\times V(E) \rightarrow \F_p,\] that is symmetric, non-degenerate and bilinear with respect to the action of $\Omega$.
\item[Step 3] The next step is to show that $ \Pi_E $ is isomorphic to the intersection of two $ \Omega $-modules $ V_1(E) $ and $ V_2(E) $, both of $\Omega$ corank $1$ (see Proposition \ref{prop}). The finiteness of $ \Pi_E $ can be interpreted as a question about the finiteness of the intersection of two corank-1 $ \Omega $-modules. Conjecture A of Coates and Sujatha, and the finiteness of $V_1(E)\cap V_2(E)$ implies that the $\mu$-invariant of the Selmer group of $E$ vanishes, see Lemma \ref{boring lemma 1}.
\item[Step 4] By considering projections of the modules $ V_1(E) $ and $ V_2(E) $ onto certain summands of $ V(E) $, we reduce the problem to studying the finiteness of intersections of two submodules $M_1$ and $M_2$ of $(\Omega^\vee)^2$ for which $M_i\simeq \Omega^\vee$. We refer to Proposition \ref{Propn on M1 and M2} for further details.
\end{description}
 Our heuristic is based on the idea that as $E$ is allowed to vary over elliptic curves with good ordinary reduction at $p$, the pairs $(M_1, M_2)$ are equidistributed with respect to a natural probability measure on all such pairs. Our calculations in section \ref{s 4} show that the event of their intersection being finite occurs with probability 1, see Theorem \ref{thm:probability}. This heuristic argument supports Conjecture \ref{mu=0 on average}, which states that on average, the $ \mu $-invariant of elliptic curves over $ \mathbb{Q} $ which good ordinary reduction at $p$ is $0$. 
We now make the heuristic precise. Let \( M_1 \) and \( M_2 \) be \(\Omega\)-submodules of corank \(1\) contained in \((\Omega^\vee)^2\). For each \( i \), let \( N_i \subset \Omega^2 \) be the submodule such that \( M_i^\vee = \Omega^2 / N_i \). Given a natural number \( n \geq 1 \), define \( \Omega_n := \Omega / (T^n) \). A submodule \( N \subset \Omega^2 \) (resp. \( N \subset \Omega_n^2 \)) is said to be \emph{maximal} if it is not contained in \( T\Omega^2 \) (resp. $T\Omega_n^2$). Any cyclic submodule of $\Omega^2$ is contained in a unique maximal submodule. Let \( \mathcal{M} \) (resp. \( \mathcal{M}_n \)) denote the space of pairs of maximal cyclic submodules \( (N_1, N_2) \) contained in \( \Omega^2 \times \Omega^2 \) (resp. \( \Omega_n^2 \times \Omega_n^2 \)). The spaces \( \mathcal{M}_n \) form an inverse system under the natural projection maps \( \mathcal{M}_n \to \mathcal{M}_m \) for \( n \geq m \), allowing us to identify \( \mathcal{M} \) with the inverse limit \( \varprojlim_n \mathcal{M}_n \). We equip \( \mathcal{M} \) with a probability measure \( \mathbb{P} \), defined as the inverse limit of the uniform probability measures \( \mathbb{P}_n \) on each \( \mathcal{M}_n \).  

Now, let \( E \) be an elliptic curve defined over \( \mathbb{Q} \). There exists a unique global minimal Weierstrass model of the form  
\[
E_{A,B} : y^2 = x^3 + A x + B,
\]
where \( (A, B) \) is a pair of integers satisfying the discriminant condition \( \Delta_{A,B} = 4A^3 + 27B^2 \neq 0 \) along with the minimality conditions \( \ell^4 \nmid A \) and \( \ell^6 \nmid B \) for every prime \( \ell \). The naive height of \( E_{A,B} \) is given by  
\[
\operatorname{Ht}(E_{A,B}) := \max \{ |A|^3, |B|^2 \}.
\]
For a positive real number \( x \), define \( \mathcal{C}(x) \) to be the set of all minimal Weierstrass models \( E_{A,B} \) satisfying \( \operatorname{Ht}(E_{A,B}) \leq x \).  

Fix an odd prime \( p \) and define \( \mathcal{T}_p(x) \) as the subset of \( \mathcal{C}(x) \) consisting of curves with good ordinary reduction at \( p \) such that \( V_1(E) \cap V_2(E) \) is infinite. Similarly, define \( \mathcal{S}_p(x) \) as the subset of \( \mathcal{C}(x) \) consisting of curves with good ordinary reduction at \( p \) for which the \( \mu \)-invariant of \( \operatorname{Sel}_{p^\infty}(E/\mathbb{Q}_{\mathrm{cyc}}) \) is nonzero.
\begin{heuristic}\label{heuristic main}
    With respect to notation above,
    \[\lim_{x\rightarrow\infty} \frac{\# \mathcal{T}_p(x)}{\# \mathcal{C}(x)}\leq \mathbb{P}\left(\{(N_1, N_2)\in \mathcal{M}\mid N_1\cap N_2\text{ is infinite}\}\right).\]
\end{heuristic}

We now state the main result of the article.
\begin{lthm}\label{thm A}
    Assume the Heuristic \ref{heuristic main} and Conjecture A of Coates and Sujatha \cite{CoatesSujathaMathAnnalen} for the vanishing of the $\mu$-invariant for the fine Selmer group of any elliptic curve over $\Q$. Then, we have that 
    \[\lim_{x\rightarrow\infty} \frac{\# \mathcal{S}_p(x)}{\# \mathcal{C}(x)}=0.\]
\end{lthm}
 
\par Our results suggest a broader validity of the conjecture on the average vanishing of the $\mu$-invariant, motivating further investigation in more generalized settings. With the growing interest in heuristic models in arithmetic statistics, it is only natural that they will further enrich the interplay between arithmetic statistics and Iwasawa theory. Such  investigations will potentially remain a fertile ground for future research.
\subsection*{Declarations}
\begin{description}
    \item[Consent for publication] The authors give their consent for the publication of the manuscript under review.
     \item[Conflict of interest] Not applicable, there is no conflict of interest to report.
    \item[Availability of data and materials] No data was generated or analyzed in obtaining the results in this article.
    \item[Funding] There are no funding sources to report.
\end{description}

\section{Iwasawa theory of Selmer groups}
\par In this section, we discuss the Iwasawa theory of elliptic curves. For a more comprehensive treatment of the subject, we refer to \cite{CoatesSujathaGCEC,GreenbergIwasawath}.
\subsection{Selmer groups associated to elliptic curves}
\par Let $E$ be an elliptic curve over $\Q$ and $p$ be an odd prime number. Let $K$ be a number field and set $\op{G}_K$ to denote the absolute Galois group $\op{Gal}(\bar{K}/K)$ and let $\Omega_K$ be the finite primes of $K$. For each prime $v\in \Omega_K$, choose an embedding $\iota_v: \bar{K}\hookrightarrow \bar{K}_v$. Setting $\op{G}_{K_v}:=\op{Gal}(\bar{K}_v/K_v)$, find that $\iota_v$ induces an inclusion at the level of Galois groups $\op{G}_{K_v}\hookrightarrow \op{G}_K$. For $F\in \{K, K_v\}$, set \[H^i(F, \cdot):=H^i(\op{G}_F, \cdot)\] and consider the Kummer sequence
\[0\rightarrow \frac{E(F)}{p^n E(F)}\xrightarrow{\kappa_F} H^1(F, E[p^n])\xrightarrow{\theta_F} H^1(F, E)[p^n]\rightarrow 0. \]
The $p^n$ Selmer group $\op{Sel}_{p^n} (E/K)$ consists of all classes $f\in H^1(K, E[p^n])$ such that $\op{res}_v(f)\in \op{Image}(\kappa_{K_v})$ for all primes $v\in \Omega_K$. The $p^\infty$ Selmer group is defined to be the direct limit 
\[\op{Sel}_{p^\infty} (E/K):=\lim_{n\rightarrow \infty} \op{Sel}_{p^n} (E/K).\]
Recall that the Selmer group defined above fits into a natural short exact sequence 
\begin{equation}\label{ses selmer}0\rightarrow E(K)\otimes \Q_p/\Z_p\rightarrow \op{Sel}_{p^\infty}(E/K)\rightarrow \Sh(E/K)[p^\infty]\rightarrow 0,\end{equation}
where $\Sh(E/K)$ denotes the Tate-Shafarevich group of $E$ over $K$. The Tate-Shafarevich group $\Sh(E/K)$ is conjectured to be finite. If $\Sh(E/K)[p^\infty]$ is finite, then, 
\[\op{corank} \op{Sel}_{p^\infty} (E/K)=\op{rank} E(K). \]
\par We let $\Q(\mu_{p^\infty})$ denote the extension of $\Q$ that is generated by the $p$-primary roots of unity and set $\Q_{\op{cyc}}\subset \Q(\mu_{p^\infty})$ to denote the cyclotomic $\Z_p$-extension of $\Q$. Let $\Gamma:=\op{Gal}(\Q_{\op{cyc}}/\Q)$ and choose a topological generator $\gamma\in \Gamma$. Let $\Q_n\subset \Q_{\op{cyc}}$ denote the \emph{n-th layer}, i.e., the subfield for which $[\Q_n:\Q]=p^n$. We identify the Galois group $\op{Gal}(\Q_n/\Q)$ with $\Gamma_n:=\Gamma/\Gamma^{p^n}$. We analyze the structure of Selmer groups over the cyclotomic $\Z_p$-extension of $\Q$, defined as follows
\[\op{Sel}_{p^\infty} (E/\Q_{\op{cyc}}):=\varinjlim_n \op{Sel}_{p^\infty} (E/\Q_{n}).\]
\par The Iwasawa algebra $\Lambda$ is the completed group ring 
\[\Lambda:=\varprojlim_n \Z_p[\Gamma_n].\] Let $\gamma$ be a topological generator of $\Gamma$ and set $T:=(\gamma-1)$. Then $\Lambda$ may be identified with the formal power series ring $\Z_p\llbracket T\rrbracket$. Let $M$ and $M'$ be finitely generated and torsion $\Lambda$-modules, then $M$ is pseudo-isomorphic to $M'$ if there is a $\Lambda$-module map $f: M\rightarrow M'$ whose kernel and cokernel are finite. In this context, the structure theorem \cite[Ch. 13]{washington} asserts that any finitely generated and torsion $\Lambda$-module $M$ is pseudo-isomorphic to $M'$, which is of the form
\[M'=\left(\bigoplus_{i=1}^s \Lambda/(p^{m_i})\right)\oplus \left(\bigoplus_{j=1}^t \Lambda/(f_j)\right).\] Here, $m_i$ are positive integers and $f_j\in \Z_p[T]$ are distinguished polynomials, i.e., monic polynomials whose non-leading coefficients are divisible by $p$. Then the $\mu$-invariant of $M$ is defined to be $\mu(M):=\sum_{i=1}^s m_i$, with the understanding that $\mu(M)=0$ when $s=0$. The $\lambda$-invariant on the other hand is the sum $\lambda(M):=\sum_{i=1}^t \op{deg} f_j$ and taken to be $0$ if $t=0$. The Iwasawa invariants are well defined, i.e., independent of the choice of module $M'$ and its decomposition into cyclic $\Lambda$-modules. It is easy to see that $\mu(M)=0$ if and only if $M$ is finitely generated as a $\Z_p$-module. Furthermore, the $\lambda$-invariant is given by $\lambda(M)=\op{rank}_{\Z_p}(M)$. 

\par A discrete, $p$-primary $\Lambda$-module $M$ has a Pontryagin dual given by \[M^\vee:=\op{Hom}_{\Z_p}\left(M, \Q_p/\Z_p\right).\] We say that $M$ is cofinitely generated (resp. cotorsion) as a $\Lambda$-module if $M^\vee$ is finitely generated (resp. torsion). The $p$-primary Selmer group $\op{Sel}_{p^\infty}(E/\Q_{\op{cyc}})$ is a $p$-primary discrete module over $\Lambda$. Moreover, if $E$ has good ordinary reduction at $p$, the Selmer group $\op{Sel}_{p^\infty}(E/\Q_{\op{cyc}})$ is a cofinitely generated and cotorsion as a $\Lambda$-module. This follows from results of Kato \cite{kato}. Throughout the rest of this article, we shall assume that $E$ is an elliptic curve over $\Q$ with good ordinary reduction at $p$. The $\mu$ and $\lambda$-invariants of the dual Selmer group $\op{Sel}_{p^\infty}(E/\Q_{\op{cyc}})^\vee$ are well defied and denoted by $\mu_p(E)$ and $\lambda_p(E)$ respectively. 

\par The Galois representation on $E[p]$ is denoted by 
\[\rho_{E,p}: \op{G}_{\Q}\rightarrow \op{GL}_2(\F_p).\] The prime $p$ is exceptional for $E$ if $\rho_{E,p}$ is not surjective. Serre's open image theorem implies that if $E$ is an elliptic curve over $\Q$ which is not CM, then, there are only finitely many exceptional primes. 
\begin{conjecture}[Greenberg]
    Let $E_{/\Q}$ be an elliptic curve for which $\rho_{E,p}$ is irreducible. Then, $\mu_p(E)=0$. 
\end{conjecture}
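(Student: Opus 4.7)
The plan is to attack Greenberg's conjecture by combining the module-theoretic reduction developed in this article with external input from the Iwasawa theory of modular forms. The proposal has three stages, of which the third is where all of the known obstructions concentrate.

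First, I would reduce the conjecture to a purely module-theoretic assertion. By the discussion preceding Proposition \ref{prop}, the vanishing $\mu_p(E)=0$ is equivalent to the finiteness of the residual Greenberg Selmer group $\op{Sel}^{\op{Gr}}(E[p]/\Q_{\op{cyc}})$, and, under Conjecture A of Coates--Sujatha, to the finiteness of $\Pi_E = V_1(E)\cap V_2(E)$. The Coates--Sujatha hypothesis follows from Iwasawa's classical $\mu=0$ conjecture applied to the cyclotomic $\Z_p$-extension of $\Q(E[p])$, which is known unconditionally when that field is abelian over $\Q$ (Ferrero--Washington), so the target becomes a question about how two specific $\Omega$-corank-one submodules of the non-degenerately paired corank-two module $V(E)$ meet.

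Second, in parallel I would invoke the congruence method of Greenberg--Vatsa. Under the irreducibility of $\rho_{E,p}$, the residual Greenberg Selmer group depends only on the isomorphism class of $\rho_{E,p}$, so it suffices to exhibit a single cuspidal newform $g$ of weight two with $\rho_{g,p}\cong \rho_{E,p}$ and $\mu(g)=0$. Such a $g$ would be sought via level-raising, and $\mu(g)$ would be pinned down by the Main Conjecture (in the form proved by Skinner--Urban under their hypotheses) to the $\mu$-invariant of the associated Mazur $p$-adic $L$-function, which in turn can be analyzed through $\Lambda$-adic modular symbols or Perrin-Riou's construction.

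Third, the two threads should be fused to control the intersection $V_1(E)\cap V_2(E)$ \emph{uniformly} in $E$. The probabilistic computation underlying Theorem \ref{thm:probability} identifies a codimension-$\geq 1$ \emph{degenerate locus} in the ambient space of pairs $(M_1,M_2)\subset (\Omega^\vee)^2$ on which the intersection is infinite; the goal would be to prove that the pair arising from an elliptic curve never lies in this locus. The natural source of rigidity is the local condition at $p$ — specifically, the unramified-versus-ramified filtration forced by ordinarity — which one would hope forces $V_1(E)$ and $V_2(E)$ into a transverse position with respect to the canonical decomposition of $V(E)$. The main obstacle is precisely this step: the Greenberg--Vatsa strategy needs a congruent form with known vanishing $\mu$, which is essentially a restatement of the original problem, and the heuristic of this paper is by design probabilistic and so cannot rule out individual ``bad'' curves. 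Any genuine proof of the full conjecture will therefore require external arithmetic input — for instance, a non-vanishing theorem for a $\Lambda$-adic $L$-function, or a deformation-theoretic rigidity statement — beyond what the statistical methods of this article provide, and this is what keeps the conjecture open.
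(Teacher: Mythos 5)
The statement you are addressing is a conjecture, and the paper does not prove it: the article only gives heuristic evidence, by reducing the vanishing of $\mu_p(E)$ to the finiteness of $V_1(E)\cap V_2(E)$ (Lemma \ref{boring lemma 1}, Proposition \ref{prop}) and then showing in Theorem \ref{thm:probability} that, for a natural probability measure on pairs of maximal cyclic submodules of $\Omega^2$, the intersection is trivial with probability $1$. Your proposal is likewise not a proof, and you say so yourself: the decisive third stage is missing. Concretely, the Greenberg--Vatsal congruence step only transfers the vanishing of $\mu$ across forms with isomorphic residual representation, so it requires exhibiting one congruent newform $g$ with $\mu(g)=0$, which (as you note) is essentially the original problem; and the probabilistic input of Theorem \ref{thm:probability} is a statement about a measure on abstract pairs $(N_1,N_2)$, which by construction cannot exclude that a particular curve lands in the degenerate locus. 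So no argument for the conjecture is actually produced, and what remains is a plan whose key rigidity statement (that ordinarity forces $V_1(E)$ and $V_2(E)$ into transverse position for every $E$) is unsupported.

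There is also a genuine error in your first stage. You claim that Conjecture A of Coates--Sujatha follows from Iwasawa's $\mu=0$ conjecture for $\Q(E[p])$ and that this is ``known unconditionally when that field is abelian over $\Q$ (Ferrero--Washington)''. But under the running hypothesis that $\rho_{E,p}$ is irreducible (and for $100\%$ of curves, surjective), $\op{Gal}(\Q(E[p])/\Q)$ is a nonabelian subgroup of $\op{GL}_2(\F_p)$, so Ferrero--Washington never applies in the cases at issue; the classical $\mu=0$ conjecture for $\Q(E[p])$ is itself open there. Thus even your reduction to the finiteness of $\Pi_E=V_1(E)\cap V_2(E)$ is conditional on an unproved conjecture, in addition to the unproved transversality. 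The honest conclusion, consistent with the paper, is that the statement remains a conjecture for which only average-case heuristic evidence is available.
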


\par Let $p$ be a fixed prime number. It is natural to ask how often $\rho_{E,p}$ is irreducible? Let us make this notion precise. Given an elliptic curve $E_{/\Q}$ there is a unique global minimal Weierstrass model
\[E_{A,B}:y^2=x^3+Ax+B,\]
where $(A,B)$ is a pair of integer for which $\Delta_{A,B}=4A^3+27B^2\neq 0$, and $\ell^4\nmid A$ or $\ell^6\nmid B$ for all primes $\ell$. The naive height of $E_{A,B}$ is defined as follows
\[\op{Ht}(E_{A,B}):=\op{max}\{|A|^3, |B|^2\}.\]
Let $x>0$ be a positive real number and set to be the set of minimal Weierstrass models $E_{A,B}$ such that $\op{Ht}(E_{A,B})\leq x$. There is a natural identification of $\cC(x)$ with the following set: 
\[
\begin{Bmatrix}
 & \abs{A}\leq \sqrt[3]{X}, \ \abs{B}\leq \sqrt{X}\\
(A,B) \in \Z \times \Z: & \quad 4A^3 +27B^2 \neq 0 \\
& \textrm{for all primes } \ell \textrm{ if } \ell^4 |A, \textrm{then } \ell^6\nmid B 
\end{Bmatrix}.
\]
Given a set $S$ of elliptic curves $E_{/\Q}$, set $S(x):=S\cap \cC(x)$. The density of $S$ is defined as the following limit 
\[\mathfrak{d}(S):=\lim_{x\rightarrow \infty} \frac{\# S(x)}{\# \cC(x)},\] provided it exists. The upper (resp. lower) limit $\overline{\mathfrak{d}}(S)$ (resp. $\underline{\mathfrak{d}}(S)$) is defined by replacing the above limit with $\limsup$ (resp. $\liminf$). We say that $S$ consists of $c\%$ of elliptic curves if $\mathfrak{d}(S)$ exists and equals $c/100$. On the other hand, we say that $S$ consists of at least (resp. at most) $c \% $ of elliptic curves if $\underline{\mathfrak{d}}(S)\geq c/100$ (resp. $\overline{\mathfrak{d}}(S)\leq c/100$).

\par It follows from results of Duke \cite{Dukeexceptional} that $\rho_{E,p}$ is surjective for $100\%$ elliptic curves $E_{/\Q}$ when ordered by naive height. In fact, it is shown that $100\%$ of elliptic curves have no exceptional primes. Thus, Greenberg's conjecture has the following consequence (for a fixed odd prime number $p$).

\begin{conjecture}[$\mu=0$ on average -- precise version]\label{ detailed conjecture}
    Fix an odd prime number $p$. Let $S$ be the set of elliptic curves with good ordinary reduction at $p$ for which $\mu_p(E)>0$. Then, we have that $\mathfrak{d}(S)=0$. 
\end{conjecture}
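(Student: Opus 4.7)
The plan is to implement the heuristic strategy sketched in the introduction; since the final statement is framed as a conjecture, the output will be a plausibility argument rather than a rigorous proof. First I would reduce the vanishing of $\mu_p(E)$ to the finiteness of the quotient $\Pi_E := \op{Sel}^{\op{Gr}}(E[p]/\Q_{\op{cyc}})/R(E[p]/\Q_{\op{cyc}})$, conditional on the Coates--Sujatha Conjecture A for $R_{p^\infty}(E/\Q_{\op{cyc}})$. Since Conjecture A is itself a consequence of Iwasawa's classical $\mu=0$ conjecture (which holds in the relevant abelian setting by Ferrero--Washington-type inputs), it suffices to argue that $\Pi_E$ is finite for a density-one set of elliptic curves $E_{/\Q}$ with good ordinary reduction at $p$.

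Next I would identify $\Pi_E$ with the intersection $V_1(E)\cap V_2(E)$ inside an $\Omega$-module $V(E)$ of $\Omega$-corank $2$, where each $V_i(E)$ has $\Omega$-corank $1$. The ambient module carries a non-degenerate pairing with respect to which the $V_i(E)$ play the role of maximal isotropic submodules, mirroring the Poonen--Rains set-up over $\F_p$. Projecting the pair $(V_1(E),V_2(E))$ onto suitable summands of $V(E)$ yields a pair $(M_1,M_2)$ of submodules of $(\Omega^\vee)^2$ with $M_i\simeq \Omega^\vee$, and $M_1\cap M_2$ agrees with $\Pi_E$ up to a finite discrepancy that does not affect finiteness.

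With this reduction in place, I would endow the space of such pairs $(M_1,M_2)$ with a natural probability measure, heuristically modelling how $(V_1(E),V_2(E))$ is distributed as $E$ varies over elliptic curves ordered by naive height. The remaining task is to show that the event $\{M_1\cap M_2\text{ is finite}\}$ has probability $1$ under this measure, which is the content of Theorem \ref{thm:probability} in Section \ref{s 4}. Combined with the Coates--Sujatha reduction above, this gives the heuristic prediction $\mathfrak{d}(S)=0$ and completes the plan.

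The main obstacle, and the reason this remains a conjecture rather than a theorem, is the absence of any rigorous matching between the abstract probability measure used in the heuristic and the actual arithmetic distribution of the pairs $(V_1(E),V_2(E))$ as $E$ ranges over the family. Even over $\F_p$, the analogous equidistribution statement is unproven (it is the substance of the Poonen--Rains conjecture itself), and in the Iwasawa-theoretic setting one faces the additional difficulty that $\Omega = \F_p\llbracket T\rrbracket$ is not a field, so one must work with pairs of $\Omega$-modules rather than with Grassmannians of maximal isotropic subspaces over a field. Establishing such equidistribution in any useful form for this family appears to lie well outside the range of currently available techniques, so the present approach can only corroborate, not prove, the conjecture.
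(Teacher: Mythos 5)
Your proposal follows essentially the same route as the paper's own heuristic: conditioning on Coates--Sujatha Conjecture A, identifying the obstruction with $V_1(E)\cap V_2(E)$, projecting to the pair $(M_1,M_2)$ in $(\Omega^\vee)^2$ (Proposition \ref{prop}), and invoking Theorem \ref{thm:probability} for the measure-one finiteness of the intersection, with the unproven equidistribution correctly identified as the reason the statement remains a conjecture. One small caveat: your parenthetical that Conjecture A ``holds in the relevant abelian setting by Ferrero--Washington-type inputs'' is misleading, since $\Q(E[p])$ is generally non-abelian over $\Q$ (indeed $\rho_{E,p}$ is surjective for $100\%$ of curves), so Conjecture A is genuinely conjectural here and is kept as a hypothesis in the paper as well.
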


\par Let $E$ be an elliptic curve over $\Q$ with good ordinary reduction at $p$. We recall a variant of the Selmer group which was introduced by Greenberg \cite{Greenberpadic}. Let $\Sigma$ be the set of primes $\ell$ such that either $E$ has bad reduction at $\ell$ or $\ell=p$. For each prime $\ell\in \Sigma$, there is a local condition $\cH_\ell(E/\Q_{\op{cyc}})$ called the Greenberg local condition. These are defined as follows. For $\ell\in \Sigma\setminus \{p\}$, we set 
\[\cH_\ell(E/\Q_{\op{cyc}}):=\bigoplus_{v|\ell} H^1(\Q_{\op{cyc}, v}, E[p^\infty]),\] where $v$ ranges over the primes of $\Q_{\op{cyc}}$ that lie above $\ell$. To define the condition at $p$, let $\eta_p$ denote the unique prime of $\Q_{\op{cyc}}$ that lies above $p$. Note that $E$ has good ordinary reduction at $p$, and thus there is a natural decomposition of $p$-primary $\op{G}_{p}$-modules
\[0\rightarrow \widehat{E}[p^\infty]\rightarrow E[p^\infty]\rightarrow \widetilde{E}[p^\infty]\rightarrow 0,\] where $\widehat{E}$ is the formal group of $E$ and $\widetilde{E}$ is the reduction of the Neron model of $E$ at $p$. Let $\op{I}_{p}$ denote the inertia subgroup of $\op{G}_p$ and note that $\widetilde{E}$ is unramified as a $\op{G}_p$-module, i.e., $\op{I}_p$ acts trivially on it. Both $\widehat{E}[p^\infty]$ and $\widetilde{E}[p^\infty]$ are isomorphic to $\Q_p/\Z_p$. Then at $p$, we set 
\[\cH_p(E/\Q_{\op{cyc}}):= \frac{H^1(\Q_{\op{cyc}, \eta_p}, E[p^\infty])}{\op{ker}\left(H^1(\Q_{\op{cyc}, \eta_p}, E[p^\infty])\longrightarrow H^1(\op{I}_{\eta_p}, \widetilde{E}[p^\infty])\right)}.\]
\begin{definition}
    With respect to notation above, the Greenberg Selmer group is defined as follows
    \[\op{Sel}^{\op{Gr}}(E/\Q_{\op{cyc}}):=\op{ker}\left(H^1(\Q_\Sigma/\Q_{\op{cyc}}, E[p^\infty])\longrightarrow \bigoplus_{\ell\in \Sigma} \cH_\ell(E/\Q_{\op{cyc}})\right)\]
\end{definition}
We note that the Selmer group $\op{Sel}_{p^\infty}(E/\Q_{\op{cyc}})$ and $\op{Sel}_{p^\infty}^{\op{Gr}}(E/\Q_{\op{cyc}})$ coincide \cite[Ch.2, Propositions 2.1 and 2.4]{GreenbergITEC}, and thus we shall simply denote them by $\op{Sel}_{p^\infty}(E/\Q_{\op{cyc}})$. 

\subsection{The fine Selmer group}
\par In this section we introduce the fine Selmer group associated to an elliptic curve and recall its basic properties. Throughout, $E$ will be an elliptic curve over $\Q$ and $p$ will be an odd prime number at which $E$ has good ordinary reduction. Given a rational prime $\ell$, let $J_\ell(E/\Q_{\op{cyc}})$ denote the direct sum 
\[J_\ell(E/\Q_{\op{cyc}}):=\bigoplus_{v|\ell} H^1(\Q_{\op{cyc}, v}, E[p^\infty]),\] where the sum is over the primes of $\Q_{\op{cyc}}$ that lie above $\ell$. Let $\Sigma$ be a finite set of rational primes that contain $p$ and the primes at which $E$ has bad reduction. Then, the fine Selmer group of $E$ over $\Q_{\op{cyc}}$ is defined as follows:
\begin{equation}\label{def of fine selmer}R_{p^\infty}(E/\Q_{\op{cyc}}):=\op{ker}\left( H^1(\Q_{\Sigma}/\Q_{\op{cyc}}, E[p^\infty])\longrightarrow \bigoplus_{\ell\in \Sigma} J_\ell(E/\Q_{\op{cyc}})\right). \end{equation}
The definition is in fact independent of the choice of $\Sigma$, cf. \cite[section 3]{sujathawitte}. The fine Selmer group is a cofinitely generated $\Lambda$-module. When $E$ has good ordinary reduction at $p$, then it is known that $R_{p^\infty}(E/\Q_{\op{cyc}})$ is also cotorsion as a $\Lambda$-module. We recall the conjecture of Coates of Sujatha \cite[Conjecture A]{CoatesSujathaMathAnnalen}, which is the analogue of Iwasawa's $\mu=0$ conjecture.
\begin{conjecture}[Coates and Sujatha]
    Let $E$ be an elliptic curve over $\Q$ and $p$ be an odd prime number at which $E$ has good ordinary reduction. The $\mu$-invariant of $R_{p^\infty}(E/\Q_{\op{cyc}})$ is equal to $0$. Equivalently, $R_{p^\infty}(E/\Q_{\op{cyc}})$ is a cofinitely generated $\Z_p$-module. 
\end{conjecture}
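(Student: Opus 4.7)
Since this is an open conjecture (and is known to follow from Iwasawa's classical $\mu=0$ conjecture, by \cite[Corollary 3.5]{CoatesSujathaMathAnnalen}), my proof proposal has two layers: a conditional reduction to the classical setting, and a pointer to the heuristic route taken in the rest of the paper.

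For the reduction, set $K := \Q(E[p^\infty])$ and $K_{\op{cyc}} := K \cdot \Q_{\op{cyc}}$. The plan is as follows. First, I would apply the Hochschild--Serre inflation-restriction sequence to compare $R_{p^\infty}(E/\Q_{\op{cyc}})$ with $R_{p^\infty}(E/K_{\op{cyc}})^{\op{Gal}(K_{\op{cyc}}/\Q_{\op{cyc}})}$; the error terms are cohomology groups of a compact $p$-adic Lie group acting on $E[p^\infty]$, and a standard control argument (using that the relevant global and local cohomology is finite or pseudo-null) shows that these error terms do not contribute to the $\mu$-invariant. Second, over $K_{\op{cyc}}$ the Galois module $E[p^\infty]$ becomes, as an abelian group, isomorphic to $(\Qp/\Zp)^2$, and by a Kummer-theoretic identification in the spirit of \cite{CoatesSujathaMathAnnalen}, $R_{p^\infty}(E/K_{\op{cyc}})^\vee$ is pseudo-isomorphic to a sum of copies of the classical unramified Iwasawa module $X(K_{\op{cyc}})$ of $K_{\op{cyc}}/K$. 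Therefore $\mu(X(K_{\op{cyc}}))=0$ would imply $\mu(R_{p^\infty}(E/\Q_{\op{cyc}}))=0$ after descending back through the finite-kernel-and-cokernel restriction map from step one.

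The step I expect to be the main obstacle is precisely the vanishing of $\mu(X(K_{\op{cyc}}))$. Iwasawa's classical $\mu=0$ conjecture is known only for abelian extensions of $\Q$ (by Ferrero--Washington), whereas for an elliptic curve $E$ with surjective $\rho_{E,p}$ -- which, by Serre's open image theorem, is the generic situation -- the Galois group $\op{Gal}(K/\Q)$ is an open subgroup of $\op{GL}_2(\Zp)$ and in particular non-abelian. An unconditional proof along this line is thus currently out of reach, and the bottleneck lies entirely outside the Iwasawa theory of elliptic curves.

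This obstruction is the very motivation for the alternative route developed in the body of the paper: rather than attempt a pointwise proof, one interprets the residual fine Selmer group via the quotient $\Pi_E$ as an intersection of corank-$1$ $\Omega$-submodules $V_1(E), V_2(E)$ inside the corank-$2$ pairing space $V(E)$, and then shows that such an intersection is finite with probability $1$ under a natural measure on pairs $(M_1,M_2)$. This furnishes strong statistical evidence for the conjecture without requiring any progress on Iwasawa's classical $\mu=0$ problem.
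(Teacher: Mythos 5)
The statement you were assigned is a conjecture: the paper neither proves it nor claims to, so there is no internal proof to match your attempt against. Your proposal correctly recognizes this, and its overall framing mirrors the paper's: the conjecture is recorded as a consequence of Iwasawa's classical $\mu=0$ conjecture via \cite[Corollary 3.5]{CoatesSujathaMathAnnalen}, the paper states the corresponding conditional theorem (Coates--Sujatha) with hypothesis $\mu_p(K)=0$ for the \emph{number field} $K=\Q(E[p])$, and the body of the paper supplies heuristic rather than pointwise evidence. Your identification of the bottleneck --- $\op{Gal}(\Q(E[p])/\Q)$ is generically non-abelian, so Ferrero--Washington does not apply --- is also accurate.

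However, your conditional reduction as written contains a genuine misstep: you trivialize the full $p$-divisible group by setting $K=\Q(E[p^\infty])$. By the Weil pairing this field already contains $\Q(\mu_{p^\infty})$, hence $\Q_{\op{cyc}}$, so $K\cdot\Q_{\op{cyc}}=K$ is an infinite $p$-adic Lie extension of $\Q$ (an open subgroup of $\op{GL}_2(\Zp)$ worth of Galois group), not the cyclotomic $\Z_p$-extension of a number field. Consequently the ``classical unramified Iwasawa module $X(K_{\op{cyc}})$'' and the classical $\mu=0$ conjecture are not even defined for it, and descent from such a $p$-adic Lie extension back to $\Q_{\op{cyc}}$ is not ``a standard control argument'': it is exactly the delicate content of Coates--Sujatha's study of fine Selmer groups over $p$-adic Lie extensions. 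The correct reduction --- the one underlying \cite[Corollary 3.5]{CoatesSujathaMathAnnalen} and the theorem quoted in the paper --- trivializes only the residual representation: pass to the finite extension $L=\Q(E[p])$, over whose cyclotomic $\Z_p$-extension $E[p]\cong(\Z/p\Z)^2$ is a trivial Galois module; compare $R(E[p]/\Q_{\op{cyc}})$ with $R(E[p]/L_{\op{cyc}})$ by inflation--restriction with finite error terms; bound the latter by two copies of (the mod-$p$ quotient of) the classical Iwasawa module of $L_{\op{cyc}}$, whose finiteness is equivalent to $\mu_p(L)=0$; and conclude via the equivalence of Proposition \ref{residual fine selmer finite implies conjecture A} between finiteness of the residual fine Selmer group and vanishing of the $\mu$-invariant of $R_{p^\infty}(E/\Q_{\op{cyc}})$. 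So replace $\Q(E[p^\infty])$ by $\Q(E[p])$ and route the argument through the $p$-torsion module rather than through $E[p^\infty]$ over the trivializing tower; with that repair, your conditional statement coincides with the one the paper records.
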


\par The vanishing of the $\mu$-invariant can be detected from the residual Selmer group associated to $E[p]$. Given a rational prime number $\ell$, set
\[J_\ell(E[p]/\Q_{\op{cyc}}):=\bigoplus_{v|\ell} H^1(\Q_{\op{cyc}, v}, E[p]).\]
The residual fine Selmer group is defined as follows 
\[R(E[p]/\Q_{\op{cyc}}):=\op{ker}\left( H^1(\Q_{\Sigma}/\Q_{\op{cyc}}, E[p])\longrightarrow \bigoplus_{\ell\in \Sigma} J_\ell(E[p]/\Q_{\op{cyc}})\right). \]
\begin{proposition}\label{residual fine selmer finite implies conjecture A}
    With respect to notation above, the following are equivalent:
    \begin{enumerate}
        \item $R_{p^\infty}(E/\Q_{\op{cyc}})$ is cofinitely generated as a $\Z_p$-module;
        \item $R(E[p]/\Q_{\op{cyc}})$ is finite.
    \end{enumerate}
\end{proposition}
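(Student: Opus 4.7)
My plan is to establish the equivalence by reducing each side to a common statement, namely the finiteness of $R_{p^\infty}(E/\Q_{\op{cyc}})[p]$. First I rephrase condition (1). Let $M = R_{p^\infty}(E/\Q_{\op{cyc}})^\vee$, which is a finitely generated torsion $\Lambda$-module by Kato's theorem. The structure theorem gives $\mu(M) = 0$ iff $M$ is finitely generated as a $\Z_p$-module, since the cyclic factors $\Lambda/(f_j)$ are finitely generated over $\Z_p$ while the factors $\Lambda/(p^{m_i})$ are not. Topological Nakayama then shows this is equivalent to $M/pM$ being finite. By Pontryagin duality $M/pM \simeq R_{p^\infty}(E/\Q_{\op{cyc}})[p]^\vee$, so (1) is equivalent to $R_{p^\infty}(E/\Q_{\op{cyc}})[p]$ being finite.

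To connect this with (2), I would exploit the short exact sequence $0 \to E[p] \to E[p^\infty] \xrightarrow{p} E[p^\infty] \to 0$ of $\op{G}_\Q$-modules. Its associated long exact sequences in global Galois cohomology over $\Q_\Sigma/\Q_{\op{cyc}}$ and in local cohomology at each $v \in \Sigma$ assemble into a commutative diagram whose rows are the defining sequences of $R(E[p]/\Q_{\op{cyc}})$ and $R_{p^\infty}(E/\Q_{\op{cyc}})[p]$, with vertical maps $\alpha$, $\beta$, $\gamma$ given by the Bockstein on the Selmer group, on the ambient $H^1(\Q_\Sigma/\Q_{\op{cyc}}, -)$, and on the sum of local terms $\bigoplus_{\ell \in \Sigma} J_\ell$, respectively. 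The long exact sequence shows that $\beta$ and each $\gamma_v$ are surjective with kernels $E[p^\infty](\Q_{\op{cyc}})/p$ and $E[p^\infty](\Q_{\op{cyc}, v})/p$ respectively.

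A snake lemma chase, after replacing the third column with the image of the horizontal maps so the rows become short exact, yields $\ker \alpha \hookrightarrow \ker \beta$ and a surjection $\ker \gamma \twoheadrightarrow \coker \alpha$ (using that $\coker \beta = 0$). The crucial step is finiteness of these bounding modules. Since $E[p^\infty]$ is a cofree $\Z_p$-module of corank $2$, each $E[p^\infty](\Q_{\op{cyc}, v})$ is a cofinitely generated $\Z_p$-module, so its quotient by $p$ is finite. Because $\Gamma \cong \Z_p$ admits only finite-index closed subgroups, only finitely many primes $v \mid \ell$ of $\Q_{\op{cyc}}$ lie above each $\ell \in \Sigma$; hence $\ker \gamma = \bigoplus_v \ker \gamma_v$ is finite. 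Therefore $\alpha$ has finite kernel and cokernel, and combining with the first paragraph the equivalence follows.

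The most subtle point will be confirming finiteness of $\ker \gamma_v$ at primes $v \mid p$, where the formal group piece $\widehat{E}[p^\infty]^{\op{G}_{\Q_{\op{cyc}, v}}}$ could \emph{a priori} contribute a divisible subgroup to $E[p^\infty]^{\op{G}_{\Q_{\op{cyc}, v}}}$. This potential difficulty disappears because only the quotient by $p$ matters, and any divisible $\Z_p$-submodule is killed modulo $p$, leaving only the finite torsion part. Hence no case analysis according to the reduction type of $E$ at individual primes of $\Sigma$ is required.
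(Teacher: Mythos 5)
Your argument is correct, and it in effect supplies the details that the paper leaves to a citation: the paper's own proof consists of observing that every rational prime is finitely decomposed in $\Q_{\op{cyc}}$ and then invoking \cite[Proposition 3.1]{RayRamanujanJcorank}, which encapsulates exactly the comparison you carry out by hand. Your reduction of (1) to the finiteness of $R_{p^\infty}(E/\Q_{\op{cyc}})[p]$ via the structure theorem/topological Nakayama and the duality $M/pM \simeq (R_{p^\infty}(E/\Q_{\op{cyc}})[p])^\vee$, followed by the Bockstein comparison of $R(E[p]/\Q_{\op{cyc}})$ with $R_{p^\infty}(E/\Q_{\op{cyc}})[p]$ and the snake lemma (after truncating the rows at the images of the localization maps), is the standard proof of the cited result, and the finiteness of the error terms $E[p^\infty](\Q_{\op{cyc}})/p$ and $\bigoplus_v E[p^\infty](\Q_{\op{cyc},v})/p$ is handled correctly; your closing remark about $v\mid p$ is in fact already subsumed by the observation that any cofinitely generated $\Z_p$-module becomes finite after quotienting by $p$. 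The one point to tighten is your justification of finite decomposition: it is not true that $\Gamma\cong\Z_p$ ``admits only finite-index closed subgroups'' (the trivial subgroup is closed and of infinite index); what you need is that the decomposition group of each finite prime in $\Q_{\op{cyc}}/\Q$ is \emph{nontrivial}, hence open of finite index --- for $\ell=p$ because the prime is totally ramified, and for $\ell\neq p$ because $\Q_{\op{cyc},v}/\Q_\ell$ is the unramified $\Z_p$-extension, so the Frobenius generates an infinite decomposition group. With that one-line repair your proof is complete and matches the content of the paper's reference.
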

\begin{proof}
    Note that every rational prime $\ell$ decomposes into finitely many primes in $\Q_{\op{cyc}}$. The result consequently follows (for instance) from \cite[Proposition 3.1]{RayRamanujanJcorank}.
\end{proof}

\begin{proposition}\label{reducible mu=0}
    Suppose the $\rho_{E,p}$ is reducible, then, $R_{p^\infty}(E/\Q_{\op{cyc}})$ is cofinitely generated as a $\Z_p$-module.
\end{proposition}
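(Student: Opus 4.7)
The plan is to combine Proposition \ref{residual fine selmer finite implies conjecture A} with a devissage of $E[p]$ via the assumed reducibility. By that proposition, it suffices to show that $R(E[p]/\Q_{\op{cyc}})$ is finite. The reducibility of $\rho_{E,p}$ gives a short exact sequence of $\F_p[\op{G}_\Q]$-modules
\[0\rightarrow \F_p(\phi)\rightarrow E[p]\rightarrow \F_p(\psi)\rightarrow 0\]
for characters $\phi,\psi: \op{G}_\Q\rightarrow \F_p^\times$ (with $\phi\psi=\bar{\omega}$, the mod-$p$ cyclotomic character). First I would argue that this induces an exact sequence
\[R(\F_p(\phi)/\Q_{\op{cyc}})\longrightarrow R(E[p]/\Q_{\op{cyc}})\longrightarrow R(\F_p(\psi)/\Q_{\op{cyc}})\]
of residual fine Selmer groups, coming from the long exact sequence of $\op{G}_\Sigma$-cohomology and the compatibility of the local conditions (all of which are trivial) under the functorial maps $\F_p(\phi)\to E[p]\to \F_p(\psi)$. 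Hence it reduces to showing that $R(\F_p(\chi)/\Q_{\op{cyc}})$ is finite for any character $\chi: \op{G}_\Q\rightarrow \F_p^\times$.

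For this, I would pass to the fixed field $F=F_\chi$ of $\ker \chi$, which is an abelian extension of $\Q$ of degree dividing $p-1$. Since $|\op{Gal}(F/\Q)|$ is coprime to $p$, Hochschild--Serre together with the idempotent decomposition identifies $R(\F_p(\chi)/\Q_{\op{cyc}})$ with the $\chi$-isotypic component of $R(\F_p/F_{\op{cyc}})$. The latter fine Selmer group, via Poitou--Tate duality (applied after enlarging $\Sigma$ to contain the primes of $F$ above those in $\Sigma$), is controlled by the mod-$p$ reduction of the $\Sigma$-ramified Iwasawa module $X_F^\Sigma := \op{Gal}(L_F^\Sigma/F_{\op{cyc}})$, where $L_F^\Sigma$ is the maximal abelian pro-$p$ extension of $F_{\op{cyc}}$ unramified outside $\Sigma$. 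By the Ferrero--Washington theorem applied to the abelian number field $F$, $\mu_p(F)=0$; since enlarging the ramification set alters the Iwasawa module only by a module of finite type over $\Z_p$ (the local $H^0$'s at primes of $\Sigma$ are cofinitely generated over $\Z_p$), we deduce that $X_F^\Sigma$ is finitely generated over $\Z_p$. Therefore $X_F^\Sigma/pX_F^\Sigma$ is a finite $\F_p$-vector space, and a fortiori its $\chi$-isotypic piece is finite.

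The main obstacle I anticipate is the second step, namely the precise identification of $R(\F_p(\chi)/\Q_{\op{cyc}})$ with a $\chi$-eigenspace of a class-group-like Iwasawa module. The subtle point is that the fine Selmer group imposes \emph{trivial} local conditions at every prime of $\Sigma$, not merely unramified conditions, so the natural object on the class-group side is a $\Sigma$-modified Iwasawa module rather than the classical one. One must therefore verify that passing between the two flavours introduces only finite discrepancies, and that the $\chi$-isotypic projection survives this comparison. Once this identification and the mild bookkeeping with $\Sigma$ is in place, the decisive input of Ferrero--Washington closes the argument.
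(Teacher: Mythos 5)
Your overall strategy---d\'evissage of $E[p]$ into two characters $\phi,\psi$ with $\phi\psi=\bar{\omega}$ and an appeal to Ferrero--Washington over the abelian field cut out by each character---is exactly the standard argument behind the reference that the paper cites for this proposition (the paper's own ``proof'' is just that citation). One small point first: your three-term sequence of fine Selmer groups need not be exact in the middle, since a class in $R(E[p]/\Q_{\op{cyc}})$ that dies in $H^1(\Q_\Sigma/\Q_{\op{cyc}},\F_p(\psi))$ lifts only to a class for $\F_p(\phi)$ whose localizations lie in the images of the local $H^0$'s of $\F_p(\psi)$; this is harmless, because those $H^0$'s are finite and only finitely many primes of $\Q_{\op{cyc}}$ lie above $\Sigma$, so finiteness of $R(\F_p(\phi)/\Q_{\op{cyc}})$ and $R(\F_p(\psi)/\Q_{\op{cyc}})$ still forces finiteness of $R(E[p]/\Q_{\op{cyc}})$.

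The genuine gap is in your final step. The claim that $X_F^\Sigma=\op{Gal}(L_F^\Sigma/F_{\op{cyc}})$ is finitely generated over $\Z_p$ is false in general: since $p\in\Sigma$, the passage from the classical unramified Iwasawa module $X_F$ to the $\Sigma$-ramified module is \emph{not} a finite-type-over-$\Z_p$ perturbation---the inertia/local-unit contribution at the primes above $p$ has positive $\Lambda$-rank, and by weak Leopoldt $X_F^\Sigma$ has $\Lambda$-rank $r_2(F)$. Because $\phi\psi=\bar{\omega}$ is odd, at least one of the two characters is odd, so for that character $F$ is not totally real, $r_2(F)\geq 1$, and $X_F^\Sigma/pX_F^\Sigma$ is infinite; Ferrero--Washington gives $\mu=0$ for $X_F$, not for $X_F^\Sigma$, so your deduction collapses exactly where the proof has to close. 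The repair is to use the strict local conditions directly instead of comparing ramification sets: an element of $R(\F_p/F_{\op{cyc}})\subset \op{Hom}\bigl(\op{Gal}(F_\Sigma/F_{\op{cyc}}),\F_p\bigr)$ vanishes on the full decomposition group at every place above $\Sigma$, hence factors through the Galois group $Y$ of the maximal abelian pro-$p$ extension of $F_{\op{cyc}}$ which is unramified everywhere and in which all places above $\Sigma$ split completely. This $Y$ is a quotient of $X_F$, so Ferrero--Washington ($\mu_p(F)=0$, i.e.\ $X_F$ finitely generated over $\Z_p$) makes $Y/pY$ finite, hence $R(\F_p/F_{\op{cyc}})$ and its $\chi$-eigenspace are finite; no Poitou--Tate duality is needed. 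With this correction your argument goes through and coincides with the one in the cited literature.
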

\begin{proof}
    The result follows from \cite[Proposition 2.4]{arithstatsfineselmer}.
\end{proof}

\begin{theorem}[Coates--Sujatha]
    Let $K$ be the number field $\Q(E[p])$ and assume that the classical Iwasawa $\mu$-invariant $\mu_p(K)=0$. Then it follows that $R_{p^\infty}(E/\Q_{\op{cyc}})$ is cofinitely generated as a $\Z_p$-module.
\end{theorem}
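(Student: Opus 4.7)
The plan is to reduce the statement to a finiteness assertion for the \emph{residual} fine Selmer group, and then to compare this residual object over $\Q_{\op{cyc}}$ with the classical $p$-class groups of $K_{\op{cyc}}$, over which the Galois action on $E[p]$ is trivial.

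First, by Proposition \ref{residual fine selmer finite implies conjecture A}, it suffices to show that $R(E[p]/\Q_{\op{cyc}})$ is finite. Let $H:=\op{Gal}(K_{\op{cyc}}/\Q_{\op{cyc}})$, which embeds into $\op{GL}_2(\F_p)$ and is hence finite. I would use the inflation--restriction exact sequence to compare $H^1(\Q_\Sigma/\Q_{\op{cyc}}, E[p])$ with $H^1(\Q_\Sigma/K_{\op{cyc}}, E[p])^{H}$, together with the analogous sequences at each prime $\ell\in \Sigma$, to obtain a natural map $R(E[p]/\Q_{\op{cyc}})\to R(E[p]/K_{\op{cyc}})^{H}$ whose kernel and cokernel are subquotients of cohomology groups of $H$ with coefficients in $E[p]$ (together with local Tate cohomology of $H$). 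Since $H$ is finite and $E[p]$ is finite, all of these are finite, so the question reduces to the finiteness of $R(E[p]/K_{\op{cyc}})$.

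Over $K_{\op{cyc}}$ the absolute Galois group acts trivially on $E[p]$, so as a Galois module $E[p]\simeq (\F_p)^2$. Consequently $R(E[p]/K_{\op{cyc}})\simeq R(\F_p/K_{\op{cyc}})^2$, and I would identify $R(\F_p/K_{\op{cyc}})$ via Kummer theory: the group $H^1(K_\Sigma/K_{\op{cyc}}, \F_p)$ is Pontryagin dual to $\op{Gal}(L_\Sigma/K_{\op{cyc}})/p$, where $L_\Sigma$ is the maximal abelian pro-$p$ extension of $K_{\op{cyc}}$ unramified outside $\Sigma$, and the fine Selmer condition forces all decomposition groups at primes in $\Sigma$ to act trivially. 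Writing $Y_\Sigma(K_{\op{cyc}})$ for the Galois group of the maximal abelian pro-$p$ extension of $K_{\op{cyc}}$ that is unramified outside $\Sigma$ and in which every prime above $\Sigma$ splits completely, we obtain $R(\F_p/K_{\op{cyc}})^\vee\simeq Y_\Sigma(K_{\op{cyc}})/p$.

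Finally, the assumption $\mu_p(K)=0$ says that the unramified Iwasawa module $X(K_{\op{cyc}})$ is finitely generated as a $\Z_p$-module. A standard comparison argument (comparing $X(K_{\op{cyc}})$ with $Y_\Sigma(K_{\op{cyc}})$ through the primes in $\Sigma$: each non-$p$-adic prime decomposes into only finitely many primes in $K_{\op{cyc}}$ with finite decomposition group at the finite level, while the behaviour at the primes above $p$ is controlled by local $\Z_p$-extensions) shows that $Y_\Sigma(K_{\op{cyc}})$ is also finitely generated as a $\Z_p$-module. Hence $Y_\Sigma(K_{\op{cyc}})/p$ is finite, so $R(\F_p/K_{\op{cyc}})$ and therefore $R(E[p]/K_{\op{cyc}})$ are finite, completing the proof. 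The main obstacle I anticipate is the bookkeeping in this last comparison step: one must verify that passing from the unramified Iwasawa module to the $\Sigma$-ramified, $\Sigma$-split version preserves the property of being finitely generated over $\Z_p$, which requires careful use of the finiteness of decomposition groups in $K_{\op{cyc}}/K$ at primes in $\Sigma$ and the structure of local units modulo $p$-th powers at primes above $p$.
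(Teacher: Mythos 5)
Your proposal is correct, but it takes a genuinely different route from the paper: the paper does not argue at all, it simply invokes \cite[Corollary 3.5]{CoatesSujathaMathAnnalen}, whereas you give a self-contained argument that essentially reconstructs the Coates--Sujatha proof. Your chain of reductions is sound: Proposition \ref{residual fine selmer finite implies conjecture A} reduces the claim to finiteness of $R(E[p]/\Q_{\op{cyc}})$; since $H:=\op{Gal}(K_{\op{cyc}}/\Q_{\op{cyc}})$ is finite and $E[p]$ is finite, inflation--restriction gives a map $R(E[p]/\Q_{\op{cyc}})\to R(E[p]/K_{\op{cyc}})$ with finite kernel (only the kernel is needed, since you are pulling finiteness back), and over $K_{\op{cyc}}$ the module $E[p]$ is trivial, so $R(E[p]/K_{\op{cyc}})\simeq R(\F_p/K_{\op{cyc}})^2$ and $R(\F_p/K_{\op{cyc}})^\vee\simeq Y_\Sigma(K_{\op{cyc}})/p$, where $Y_\Sigma$ is the Galois group of the maximal abelian pro-$p$ extension of $K_{\op{cyc}}$ unramified outside $\Sigma$ and completely split at $\Sigma$. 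Two small corrections: the identification of $H^1(K_\Sigma/K_{\op{cyc}},\F_p)$ with homomorphisms of the Galois group is not Kummer theory (no $\mu_p$ is involved; it is just $H^1(G,\F_p)=\op{Hom}(G,\F_p)$), and the ``main obstacle'' you anticipate in the last step is not there: complete splitting at the primes above $\Sigma$ already forces those primes to be unramified, so $Y_\Sigma(K_{\op{cyc}})$ is simply a quotient of the everywhere-unramified Iwasawa module $X(K_{\op{cyc}})$, and $\mu_p(K)=0$ (i.e.\ $X(K_{\op{cyc}})$ finitely generated over $\Z_p$) immediately gives $Y_\Sigma(K_{\op{cyc}})/p$ finite; no bookkeeping at the $p$-adic primes or with local units is required in this direction. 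What your approach buys is transparency and self-containedness (the reader sees exactly how the classical $\mu=0$ hypothesis enters); what the paper's citation buys is brevity and the full strength of the Coates--Sujatha result, which also covers the converse-type comparisons you do not need here.
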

\begin{proof}
    The result is a special case of \cite[Corollary 3.5]{CoatesSujathaMathAnnalen}.
\end{proof}

\section{The residual Selmer group and the vanishing of the $\mu$-invariant}
\label{s 3}
\par Throughout the rest of this article, $E_{/\Q}$ is an elliptic curve and $p$ is an odd prime, such that the following conditions are satisfied: 
\begin{enumerate}
    \item $E$ has good ordinary reduction at $p$, 
    \item $\rho_{E,p}$ is irreducible. In particular, it follows that $E(\Q)[p]=0$.
\end{enumerate}
The ordinary condition can be detected by a congruence condition modulo $p$ for the Weierstrass coefficients of $E$, and is satisfied for a positive density set of elliptic curves $E_{/\Q}$. On the other hand, it follows from Duke's theorem that $\rho_{E,p}$ is surjective for almost all elliptic curves. Thus condition (2) is satisfied for $100\%$ of elliptic curves ordered by height. The vanishing of the $\mu$-invariant can be detected by the structure of the Greenberg Selmer group associated to the residual representation. Throughout, we assume that $p$ is odd and let $\Sigma$ be the set of primes of $\Q$ consisting of primes $p$ and the primes at which $\rho_{E,p}$ is ramified. Given $\ell\in \Sigma\backslash \{p\}$, set \[\cH_\ell(E[p]/\Q_{\op{cyc}}):=\bigoplus_{v|\ell} H^1(\Q_{\op{cyc}, v}, E[p]),\] where $v$ ranges over the primes of $\Q_{\op{cyc}}$ that lie above $\ell$. On the other hand, set \[\cH_p(E[p]/\Q_{\op{cyc}}):=\frac{H^1(\Q_{\op{cyc}, \eta_p}, E[p])}{\op{ker}\left(H^1(\Q_{\op{cyc}, \eta_p}, E[p])\longrightarrow H^1(\op{I}_{\eta_p}, \widetilde{E}[p])\right)}.\]
The residual Selmer group is defined as follows
\begin{equation}\label{residual greenberg selmer group}\op{Sel}^{\op{Gr}}(E[p]/\Q_{\op{cyc}}):=\op{ker}\left(H^1(\Q_{\op{cyc}}, E[p])\longrightarrow \bigoplus_{\ell\in \Sigma} \cH_\ell(E[p]/\Q_{\op{cyc}})\right).\end{equation}
Let $\Omega$ be the mod-$p$ quotient $\Lambda/(p)$, which we identify with the power series ring $\F_p\llbracket T \rrbracket$. We assume throughout that $p$ is odd and $E$ has good ordinary reduction at $p$.

\begin{lemma}
\label{lem:local}
    With respect to notation above, the following assertions hold:
\begin{enumerate}
    \item given a prime $v$ of $\Q_{\op{cyc}}$ such that $v\nmid p$, we have that $H^1(\Q_{\op{cyc}, v}, E[p])$ is finite; 
    \item $H^1(\Q_{\op{cyc}, \eta_p}, E[p])\simeq \Omega^\vee\oplus \Omega^\vee\oplus W$,
    where $W$ is a finite module. 
\end{enumerate}

\end{lemma}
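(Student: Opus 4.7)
Both parts reduce to inflation-restriction arguments combined with local cohomology computations; I would handle them in turn.

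\textit{For part (1)}, I first identify $\Q_{\op{cyc},v}$ for $v\nmid p$. Since $\Q_{\op{cyc}}/\Q$ is unramified at $\ell\neq p$ and the Frobenius at $\ell$ has non-trivial image in $\Gamma\simeq \Z_p$ (its image in $1+p\Z_p$ under the cyclotomic character is non-trivial for any rational prime $\ell\neq p$), the decomposition subgroup is itself isomorphic to $\Z_p$. Hence $\Q_{\op{cyc},v}$ is the unique unramified $\Z_p$-extension of $\Q_\ell$, which also coincides with the maximal pro-$p$ unramified extension. The plan is then to show that $H^1(\Q_\ell^{\ur}, E[p])=H^1(\op{I}_{\Q_\ell}, E[p])$ is finite --- wild inertia is pro-$\ell$ and kills $p$-primary cohomology in positive degrees, leaving cohomology of the tame inertia whose pro-$p$ quotient is $\Z_p(1)$ acting on the finite module $E[p]$ --- and then apply inflation-restriction for $\Q_{\op{cyc},v}\subset \Q_\ell^{\ur}$: the quotient $\op{Gal}(\Q_\ell^{\ur}/\Q_{\op{cyc},v})\simeq \widehat{\Z}/\Z_p\simeq \prod_{q\neq p}\Z_q$ has trivial maximal pro-$p$ quotient, so contributes nothing to mod-$p$ cohomology (by Hochschild--Serre). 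This embeds $H^1(\Q_{\op{cyc},v}, E[p])$ into the finite group $H^1(\Q_\ell^{\ur}, E[p])$.

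\textit{For part (2)}, set $K:=\Q_p$, $K_\infty:=\Q_{\op{cyc},\eta_p}$, and $\cH:=H^1(K_\infty, E[p])$, viewed as a discrete $\Omega$-module. Since $\Omega\simeq \F_p\lb T\rb$ is a complete discrete valuation ring, the structure theorem implies that any finitely generated $\Omega$-module has the form $\Omega^r\oplus (\text{finite})$. By Pontryagin duality the lemma is therefore equivalent to showing that $\cH^\vee$ is a finitely generated $\Omega$-module of $\Omega$-rank $2$. Finite generation follows from Nakayama's lemma once we know $\cH^\Gamma$ is finite: inflation-restriction for $K\subset K_\infty$ produces an exact sequence whose outer terms $H^1(\Gamma, E[p]^{\op{G}_{K_\infty}})$ and $H^1(\Q_p, E[p])$ are finite, while $H^2(\Gamma, E[p]^{\op{G}_{K_\infty}})$ vanishes because $\Gamma\simeq \Z_p$ has $p$-cohomological dimension $1$ on $p$-torsion.

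For the rank, I would count orders using the local Euler characteristic formula,
\[|H^1(K_n, E[p])|=|H^0(K_n, E[p])|\cdot |H^2(K_n, E[p])|\cdot p^{2p^n},\]
in which the first and third factors are bounded by $|E[p]|^2=p^4$ via local Tate duality and the Weil self-duality of $E[p]$. Another round of inflation-restriction for $K_n\subset K_\infty$, with uniformly bounded kernel and cokernel, yields $|\cH^{\Gamma^{p^n}}|=p^{2p^n+O(1)}$. Comparing with the structure-theoretic count $|(\Omega^\vee)^{\Gamma^{p^n}}|=|\Omega^\vee[T^{p^n}]|=p^{p^n}$ --- which uses $\gamma^{p^n}-1=T^{p^n}$ in characteristic $p$ --- forces the $\Omega$-rank of $\cH^\vee$ to equal $2$. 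Dualising back gives the claimed decomposition $\cH\simeq \Omega^\vee\oplus \Omega^\vee\oplus W$ with $W$ finite.

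\textit{Main obstacle.} The most delicate point is the rank calculation in part (2): one must verify that the kernel and cokernel bounds in inflation-restriction are genuinely uniform in $n$, which reduces to the observation that $E[p]^{\op{G}_{K_\infty}}$ stabilises as a fixed finite $\F_p$-module and that $H^1(\Gamma^{p^n}, M)$ stays of bounded order for any fixed finite $M$. These are standard once one checks that the action of $\op{G}_{K_\infty}$ on $E[p]$ factors through a quotient that stabilises in the tower, but the bookkeeping of constants has to be done carefully in order to conclude the exact exponent matching.
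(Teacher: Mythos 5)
Your proof is correct, but it takes a genuinely different route from the paper's. The paper deduces both parts from the multiplication-by-$p$ sequence $E(\Q_{\op{cyc},v})[p^\infty]/p\,E(\Q_{\op{cyc},v})[p^\infty]\to H^1(\Q_{\op{cyc},v},E[p])\to H^1(\Q_{\op{cyc},v},E[p^\infty])[p]\to 0$, quoting Greenberg--Vatsal for the cofinite $\Z_p$-generation of $H^1(\Q_{\op{cyc},v},E[p^\infty])$ when $v\nmid p$ and Greenberg's Proposition~1 for the $\Lambda$-corank $2$ of $H^1(\Q_{\op{cyc},\eta_p},E[p^\infty])$, and then invokes the structure theory over the principal ideal domain $\Omega$. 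You instead work directly with mod-$p$ coefficients: in (1) you identify $\Q_{\op{cyc},v}$ with the unramified $\Z_p$-extension of $\Q_\ell$ and combine the vanishing of $H^1$ of the prime-to-$p$ quotient $\op{Gal}(\Q_\ell^{\ur}/\Q_{\op{cyc},v})$ with the finiteness of $H^1(\op{I}_{\Q_\ell},E[p])$ coming from the tame quotient; in (2) you get finite generation of the dual from topological Nakayama (via finiteness of $H^1(\Q_{\op{cyc},\eta_p},E[p])^{\Gamma}$ and $\op{cd}_p(\Gamma)=1$) and pin down the $\Omega$-corank by Tate's local Euler characteristic over the layers together with the identity $\gamma^{p^n}-1=T^{p^n}$ in $\Omega$, with uniformly bounded inflation--restriction error terms (which are indeed uniform, since $E[p]^{\op{G}_{K_\infty}}$ is a fixed finite module and $H^1$ of a procyclic group with coefficients in a finite module has order at most that of the module). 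In effect you re-prove, at the residual level, exactly the results of Greenberg and Greenberg--Vatsal that the paper cites; your argument is longer but self-contained and yields explicit bounds, while the paper's is shorter by leaning on the literature. The only blemish is notational: in the Euler characteristic display it is the first and \emph{second} factors, $\lvert H^0\rvert$ and $\lvert H^2\rvert$, that are bounded by $p^2$ each (the third factor is $p^{2p^n}$), but this does not affect the count $\lvert H^1(K_n,E[p])\rvert=p^{2p^n+O(1)}$ or the conclusion.
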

\begin{proof}
We begin by proving part (1). 
We have a natural exact sequence
   \begin{equation}\label{ses cyc}E(\Q_{\op{cyc},v})[p^\infty]/pE(\Q_{\op{cyc},v})[p^\infty]\to H^1(\Q_{\op{cyc},v},E[p])\to H^1(\Q_{\op{cyc},v},E[p^\infty])[p]\to 0. \end{equation}
The module $H^1(\Q_{\op{cyc},v},E[p^\infty])$ cofinitely generated as a $\Z_p$-module (cf. \cite[p.33]{greenberg-vatsal}), and therefore it follows that $H^1(\Q_{\op{cyc},v},E[p^\infty])[p]$ is finite. It is clear that $E(\Q_{\op{cyc},v})[p^\infty]/pE(\Q_{\op{cyc},v})[p^\infty]$ is finite as well. Thus part (1) follows.
  \par In order to prove (2), set $v:=\eta_p$. We find that the first term in \eqref{ses cyc} is clearly finite and the last term has $\Omega$-corank $2$ by \cite[Proposition 1, p.109]{Greenberpadic}. It follows that $H^1(\Q_{\op{cyc},\eta_p},E[p])$ has $\Omega$-corank $2$. As $\Omega$ is a principal ideal domain we obtain a decomposition
   \[H^1(\Q_{\op{cyc},\eta_p},E[p])=\Omega^\vee\oplus \Omega^\vee \oplus W,\]
   where $W$ is a finite submodule.
\end{proof}
\begin{lemma}\label{mu=0 criterion lemma} With respect to notation above, the following assertions hold:
\begin{enumerate}
    \item there is a natural map 
    \[\psi\colon \op{Sel}^{\op{Gr}}(E[p]/\Q_{\op{cyc}})\rightarrow \op{Sel}^{\op{Gr}}(E/\Q_{\op{cyc}})[p]\] with finite kernel and cokernel.
    \item  We have that $\mu_p(E)=0$ if and only if $\op{Sel}^{\op{Gr}}(E[p]/\Q_{\op{cyc}})$ is finite.
\end{enumerate}

\end{lemma}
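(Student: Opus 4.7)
The plan for part (1) is to set up the standard comparison diagram relating the residual Greenberg Selmer group to the $p$-torsion of the integral Greenberg Selmer group. This diagram has two rows of the form ``Selmer equals the kernel of a global-to-local map,'' linked vertically by the connecting maps of the Kummer sequence $0 \to E[p] \to E[p^\infty] \xrightarrow{p} E[p^\infty] \to 0$. Writing $\psi$ for the induced map on Selmer groups, $\alpha$ for the middle vertical map on global cohomology, and $\beta$ for the sum of vertical maps on local conditions, the strategy is to apply the snake lemma and show that $\ker \psi$ and $\coker \psi$ are both finite.

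The first step is to check that $\alpha\colon H^1(\Q_\Sigma/\Q_{\op{cyc}}, E[p]) \to H^1(\Q_\Sigma/\Q_{\op{cyc}}, E[p^\infty])[p]$ is an isomorphism; by the long exact cohomology sequence attached to the Kummer sequence, this reduces to $H^0(\Q_{\op{cyc}}, E[p^\infty]) = 0$. Irreducibility of $\rho_{E,p}$ gives $E(\Q)[p] = 0$, and since $\op{Gal}(\Q_{\op{cyc}}/\Q)$ is pro-$p$ acting on the $\F_p$-vector space $E(\Q_{\op{cyc}})[p]$, an orbit-stabilizer argument forces any nonzero fixed point to descend to $\Q$, so $E(\Q_{\op{cyc}})[p] = 0$ and hence $E(\Q_{\op{cyc}})[p^\infty] = 0$. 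The snake lemma then yields $\ker \psi = 0$ and an injection $\coker \psi \hookrightarrow \ker \beta$, reducing part (1) to the finiteness of $\ker \beta$. For primes $\ell \in \Sigma \setminus \{p\}$, Lemma \ref{lem:local}(1) gives that $\cH_\ell(E[p]/\Q_{\op{cyc}})$ is itself finite, so the local kernel is finite automatically. The main obstacle is the case $\ell = p$: one must analyze the map $\cH_p(E[p]/\Q_{\op{cyc}}) \to \cH_p(E/\Q_{\op{cyc}})[p]$ using the filtration $0 \to \widehat{E}[p^\infty] \to E[p^\infty] \to \widetilde{E}[p^\infty] \to 0$ that enters the definition of the Greenberg local condition. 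Taking Galois and inertia cohomology of this filtration and comparing mod-$p$ with $[p]$-torsion, the discrepancy is controlled by $H^0(\op{I}_{\eta_p}, \widetilde{E}[p^\infty])/p$ and $H^0(\Q_{\op{cyc},\eta_p}, \widehat{E}[p^\infty])/p$, both of which are finite because the underlying $H^0$-groups are cofinitely generated as $\Z_p$-modules.

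Part (2) will follow formally from part (1). Since $\op{Sel}^{\op{Gr}}(E/\Q_{\op{cyc}})$ coincides with $\op{Sel}_{p^\infty}(E/\Q_{\op{cyc}})$, its Pontryagin dual $X$ is a finitely generated torsion $\Lambda$-module with $\mu$-invariant $\mu_p(E)$. By the structure theorem together with Nakayama's lemma, $\mu_p(E) = 0$ if and only if $X$ is a finitely generated $\Z_p$-module, if and only if $X/pX$ is finite, if and only if $\op{Sel}^{\op{Gr}}(E/\Q_{\op{cyc}})[p]$ is finite. Combining with the finite kernel/cokernel of $\psi$ from part (1), this is equivalent to $\op{Sel}^{\op{Gr}}(E[p]/\Q_{\op{cyc}})$ being finite, completing the argument.
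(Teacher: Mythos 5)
Your proof is correct, and the first half (the Kummer-sequence diagram, the vanishing of $E(\Q_{\op{cyc}})[p]$ from irreducibility plus the pro-$p$ property of $\Gamma$, hence that the middle vertical map is an isomorphism and $\psi$ is injective) coincides with the paper's argument; where you genuinely diverge is in the finiteness of $\coker\psi$. The paper passes to the imprimitive Selmer groups $\op{Sel}^{\op{Gr}}_{\Sigma_0}$ obtained by dropping the local conditions at the bad primes, and invokes \cite[Propositions 2.4 and 2.8]{greenberg-vatsal}: the primitive-to-imprimitive maps have finite cokernel, and the imprimitive residual Selmer group is literally isomorphic to the $p$-torsion of the imprimitive $p^\infty$-Selmer group. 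You instead run the diagram chase directly, embedding $\coker\psi$ into $\ker h$ (valid since the rows are left exact and the middle map is an isomorphism) and checking finiteness place by place: away from $p$ the whole local term $\cH_\ell(E[p]/\Q_{\op{cyc}})$ is finite by Lemma \ref{lem:local}(1), and at $p$ the kernel of $\cH_p(E[p]/\Q_{\op{cyc}})\to \cH_p(E/\Q_{\op{cyc}})[p]$ is controlled by $H^0(\op{I}_{\eta_p},\widetilde{E}[p^\infty])/p$, which is in fact zero (not merely finite), since $\widetilde{E}[p^\infty]\simeq \Q_p/\Z_p$ is unramified and divisible; the extra term $H^0(\Q_{\op{cyc},\eta_p},\widehat{E}[p^\infty])/p$ you mention is harmless but not needed for the kernel of $h_p$. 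Your route is more self-contained and makes the local source of the discrepancy explicit, at the cost of redoing by hand the local analysis that the Greenberg--Vatsal propositions package; the paper's route is shorter given those references and is the standard mechanism reused elsewhere in the literature. Part (2) is argued identically in both (structure theory of $\Lambda$-modules reducing $\mu_p(E)=0$ to finiteness of $\op{Sel}_{p^\infty}(E/\Q_{\op{cyc}})[p]$, then part (1)), with your Nakayama remark just making the standard equivalence explicit.
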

\begin{proof}
   From the Kummer sequence,
   \[0\to E[p]\to E[p^\infty]\to E[p^\infty]\to 0\] 
   one has a natural commutative diagram
   \begin{equation}\label{commutative diagram selmer}
\begin{tikzcd}[column sep = small, row sep = large]
0\arrow{r} & \op{Sel}^{\op{Gr}}(E[p]/\Q_{\op{cyc}})\arrow{r} \arrow{d}{\psi} & H^1\left(\Q_\Sigma/\Q,E[p]\right) \arrow{r} \arrow{d}{\Psi} & \bigoplus_{\ell\in \Sigma} \cH_\ell(E[p]/\Q_{\op{cyc}}) \arrow{d}{h} &\\
0\arrow{r} & \op{Sel}^{\op{Gr}}(E/\Q_{\op{cyc}})[p] \arrow{r} & H^1\left(\Q_\Sigma/\Q_{\op{cyc}},E[p^\infty]\right)[p] \arrow{r} &\bigoplus_{\ell\in \Sigma} \cH_\ell(E/\Q_{\op{cyc}})[p].
\end{tikzcd}\end{equation}
    Here, the map $h$ is the direct sum of natural maps
    \[h_\ell:\cH_\ell(E[p]/\Q_{\op{cyc}})\rightarrow  \cH_\ell(E/\Q_{\op{cyc}})[p] \]
    over the set $\ell\in \Sigma$. Since it is assumed that $E(\Q)[p]=0$, and $\Q_{\op{cyc}}/\Q$ is a pro-$p$ extension, it follows that $E(\Q_{\op{cyc}})[p]=0$. From the inflation-restriction sequence, we find that $\Psi$ is an isomorphism. Thus, $\psi$ is injective.
   
 \par It remains to show that the cokernel of $\psi$ is finite. Let $\Sigma_0$ be the places of bad reduction and define
    \[\op{Sel}_{\Sigma_0}^{\op{Gr}}(E[p]/\Q_{\op{cyc}}):=\op{ker}\left(H^1(\Q_{\op{cyc}}, E[p])\longrightarrow \bigoplus_{\ell\in \Sigma\setminus \Sigma_0} \cH_\ell(E[p]/\Q_{\op{cyc}})\right)\]
    and
    \[\op{Sel}_{\Sigma_0}^{\op{Gr}}(E/\Q_{\op{cyc}}):=\op{ker}\left(H^1(\Q_{\op{cyc}}, E[p^\infty])\longrightarrow \bigoplus_{\ell\in \Sigma\setminus \Sigma_0} \cH_\ell(E[p^\infty]/\Q_{\op{cyc}})\right).\] By \cite[Proposition 2.4]{greenberg-vatsal} and Lemma \ref{lem:local} (1) the natural maps
    \[\op{Sel}^{\op{Gr}}(E/\Q_{\op{cyc}})[p] \to \op{Sel}_{\Sigma_0}^{\op{Gr}}(E/\Q_{\op{\cyc}})[p]\] and
     \[\op{Sel}^{\op{Gr}}(E[p]/\Q_{\op{cyc}}) \to \op{Sel}_{\Sigma_0}^{\op{Gr}}(E[p]/\Q_{\op{\cyc}})\]
     are injective and have finite cokernel. By \cite[Proposition 2.8]{greenberg-vatsal} we have 
     \[\op{Sel}_{\Sigma_0}^{\op{Gr}}(E[p]/\Q_{\op{cyc}})\cong \op{Sel}^{\op{Gr}}_{\Sigma_0}(E/\Q_{\op{cyc}})[p]. \]
     It follows that $\psi$ has finite cokernel as well. This proves part (1). 

     \par For part (2), first observe that from the structure theory of $\Lambda$-modules, $\mu_p(E)=0$ if and only $\op{Sel}_{p^\infty}(E/\Q_{\op{cyc}})[p]$ is finite. Thus part (2) is a consequence of (1).
\end{proof}
We note here that a cofinitely generated $\Omega$-module is finite if and only if it is cotorsion. Thus, Lemma \ref{mu=0 criterion lemma} asserts that 
\[\mu_p(E)=0\Leftrightarrow \op{corank}_{\Omega}\op{Sel}^{\op{Gr}}(E[p]/\Q_{\op{cyc}})=0.\]

The Weil pairing $\langle,\rangle: E[p]\times E[p]\rightarrow \mu_p$ induces a non-degenerate pairing 
\[[,]_v: H^1(\Q_{\op{cyc}, v}, E[p])\times H^1(\Q_{\op{cyc}, v}, E[p])\rightarrow \F_p.\] Taking the direct sum of primes $v$ of $\Q_{\op{cyc}}$ that lie above $\ell$, one obtains a pairing 
\[(,)_\ell: \left(\bigoplus_{v|\ell} H^1(\Q_{\op{cyc}, v}, E[p])\right)\times \left(\bigoplus_{v|\ell} H^1(\Q_{\op{cyc}, v}, E[p])\right)\rightarrow \F_p\]
defined by 
\[(a,b)_\ell:=\sum_{v|\ell} [a,b]_v.\]
Let $\iota$ denote the involution on $\Gamma$, taking $\gamma$ to $\gamma^{\iota}:=\gamma^{-1}$. Set $V(E)$ to be the $\Omega$-module 
\begin{equation}\label{defn of V(E)}V(E):=\bigoplus_{\ell\in \Sigma} \left(\bigoplus_{v|\ell} H^1(\Q_{\op{cyc}, v}, E[p]) \right),\end{equation}
equipped with the pairing 
\[(,): V(E)\times V(E) \rightarrow \F_p,\] defined by 
\[(a,b):=\sum_{\ell\in \Sigma} (a,b)_\ell.\] This pairing has the property that it is bilinear, non-degenerate and that for $a,b\in V(E)$ and $\tau\in \Omega$, one has that 
\begin{equation}(\tau a, b)=(a, \tau^\iota b),\end{equation} cf. \cite[Proposition 1.5.3]{NSW}.

\begin{definition}
    Let $M$ be a subgroup of $V(E)$ and $M^{\perp}$ be the orthogonal complement of $M$ with respect to $(\cdot, \cdot)_{V(E)}$, defined as follows:
    \[M^{\perp}:=\{m\in V(E)\mid (m, m')=0\text{ for all }m'\in M\}.\]
    Then $M$ is said to be \emph{isotropic} if it is contained in $M^{\perp}$. Moreover, $M$ is \emph{maximal isotropic} if $M=M^{\perp}$.
\end{definition}
We note that when $M$ is an $\Omega$-submodule of $V(E)$, then $M^{\perp}$ is also an $\Omega$-submodule. Consider the natural map of $\Omega$-modules induced by restriction
\[\theta: H^1(\Q_\Sigma/\Q_{\op{cyc}}, E[p])\rightarrow V(E)\] and let $V_1(E)$ be the image of $\theta$. Note that the kernel of $\theta$ is the residual fine Selmer group $R(E[p]/\Q_{\op{cyc}})$. 

\begin{lemma}\label{V1 is maxl iso}
    With respect to notation above, $V_1(E)$ is a maximal isotropic $\Omega$-submodule of $V(E)$ .
\end{lemma}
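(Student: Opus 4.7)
The plan is to apply Poitou--Tate global duality, using the Weil pairing to realize $E[p]$ as its own Cartier dual $E[p]^*:=\op{Hom}(E[p],\mu_p)$. Note that $V_1(E)$ is automatically an $\Omega$-submodule as the image of an $\Omega$-linear map, and its orthogonal complement is also $\Omega$-stable because of the relation $(\tau a, b)=(a, \tau^\iota b)$. Hence ``maximal isotropic $\Omega$-submodule'' will follow once we show $V_1(E)=V_1(E)^\perp$ as $\F_p$-subspaces.

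To prove isotropy, take $a, b \in V_1(E)$ and lift them to global classes $\alpha, \beta \in H^1(\Q_\Sigma/\Q_{\op{cyc}}, E[p])$, necessarily defined over some common finite layer $\Q_n$. Form the cup product $\alpha\cup\beta \in H^2(\Q_\Sigma/\Q_n, \mu_p)$; for each $v\in \Sigma_n$, its local restriction followed by $\op{inv}_v$ recovers $[\op{res}_v\alpha, \op{res}_v\beta]_v$. The reciprocity law for the Brauer group of $\Q_n$ yields
\[\sum_{v \in \Sigma_n}\op{inv}_v\bigl(\op{res}_v(\alpha)\cup\op{res}_v(\beta)\bigr)=0,\]
and unwinding the definition of $(\cdot,\cdot)$ shows this sum equals $(a,b)$, so $V_1(E)\subseteq V_1(E)^\perp$.

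For maximality, I would invoke the Poitou--Tate nine-term exact sequence. At each layer $\Q_n$ it gives the exactness of
\[H^1(\Q_\Sigma/\Q_n, E[p]) \xrightarrow{\theta_n} \bigoplus_{v \in \Sigma_n} H^1(\Q_{n,v}, E[p]) \xrightarrow{\theta_n^*} H^1(\Q_\Sigma/\Q_n, E[p]^*)^\vee,\]
asserting that the image of $\theta_n$ is the exact annihilator, under the sum of local Tate pairings, of the image of the analogous map for $E[p]^*$. The Weil pairing furnishes a $\op{G}_\Q$-equivariant isomorphism $E[p]\cong E[p]^*$, so the two images coincide, making $\op{Im}(\theta_n)$ maximal isotropic. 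Passing to the direct limit in $n$ yields $V_1(E)=V_1(E)^\perp$.

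The main obstacle is justifying that maximal isotropy persists under the direct limit along the cyclotomic tower: one needs to verify that the transition maps between layers are appropriately compatible with the local Tate pairings, so that if $x \in V(E)$ pairs trivially with all of $V_1(E)$ and is defined at layer $\Q_n$, then $x$ already lies in $\op{Im}(\theta_n)$ at that layer. This compatibility is a consequence of the standard behavior of restriction and corestriction in Galois cohomology under cup products, and the whole argument is a packaging of the classical Poitou--Tate duality as presented in \cite[Ch.~VIII]{NSW}.
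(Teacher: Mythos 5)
Your argument is essentially the paper's: the proof in the paper also writes $V(E)=\varinjlim_n V^n(E)$ and $V_1(E)=\varinjlim_n V_1^n(E)$, invokes the finite-level statement that $\op{image}\{H^1(\Q_\Sigma/\Q_n,E[p])\to V^n(E)\}$ is maximal isotropic by citing \cite[Theorem 4.14 and Remark 4.15]{poonen}, and then passes to the limit. Your reciprocity-law argument for isotropy and your use of the Poitou--Tate nine-term sequence together with the Weil self-duality $E[p]\cong E[p]^*$ is precisely the proof of that cited theorem, so at each finite layer your write-up is a correct unpacking of the citation rather than a genuinely different route.

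The one genuine soft spot is the limit step, which you flag but then dismiss as ``standard behavior of restriction and corestriction under cup products''; as stated, that justification does not work. For a layer extension $\Q_m/\Q_n$ of degree $p^{m-n}$ one has $\sum_{w\mid v}\op{inv}_w\bigl(\op{res}(x)\cup\op{res}(y)\bigr)=[\Q_m:\Q_n]\,\op{inv}_v(x\cup y)$, since $\sum_{w\mid v}[\Q_{m,w}:\Q_{n,v}]=[\Q_m:\Q_n]$; hence the finite-level sum-of-invariants pairings are \emph{not} compatible with the restriction maps defining the direct limit --- along the cyclotomic tower their naive limit is identically zero mod $p$. The compatibility that does hold is the projection formula $\op{inv}_w(\op{res}(x)\cup y)=\op{inv}_v(x\cup\op{cor}(y))$, which says the restriction-direct-limit is naturally dual to a corestriction-inverse-limit, not to itself. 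So the step ``if $x\in V(E)$ pairs trivially with all of $V_1(E)$, then its representative at some finite layer already annihilates $V_1^m(E)$ under the level-$m$ Poitou--Tate pairing'' requires an actual argument relating the pairing $(\,,\,)_{V(E)}$ to the finite-level pairings, and the relation is not the one you invoked. To be fair, this is also the thinnest point of the paper's own proof (which simply asserts that $(\,,\,)_{V(E)}$ is the direct limit of the $(\,,\,)_{V^n(E)}$ and concludes maximality by a pointwise limit of pairings), so your proposal is no weaker than the printed argument; but any careful version of this lemma must do its real work exactly at this compatibility.
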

\begin{proof}
    We write $V(E)$ as a direct limit of $\Omega$-modules $V(E)=\varinjlim_n V^n(E)$,
    where \[V^n(E):=\bigoplus_{\ell\in \Sigma} \left(\bigoplus_{v|\ell} H^1(\Q_{n, v}, E[p]) \right).\]
    Likewise, $V_1(E)$ is a direct limit $\varinjlim_n V_1^n(E)$, where 
    \[V_1^n(E):=\op{image}\{H^1(\Q_\Sigma/\Q_n, E[p])\longrightarrow V^n(E)\}.\]

    The pairing $(,)_{V(E)}$ on $V(E)$ is the direct limit of pairings $(,)_{V^n(E)}$. It follows from this that $V_1(E)$ is isotropic as a submodule of $V(E)$. Each of the submodules $V_1^n(E)$ are maximal isotropic with respect to the pairing $(,)_{V^n(E)}$ by \cite[Theorem 4.14 and remark 4.15]{poonen}. It follows that $V_1(E)$ is maximal isotropic. In greater detail, let $a\in V_1(E)$ and $b\in V(E)$ be such that $(a,b)=0$. Then we write $a$ (resp. $b$) as a limit of $(a_n)$ (resp. $(b_n)$). Here, $a_n\in V_1^n(E)$ and $b_n\in V^n(E)$, and note that 
    \[(a,b)=\lim_n (a_n, b_n).\] It follows that $(a_n, b_n)=0$ for all large enough values of $n$. This implies that $b_n\in V_1^n(E)$ for all large enough values of $n$ and therefore, $b=\lim_n b_n$ belongs to $V_1(E)$. This shows that $V_1(E)$ is maximal. 
\end{proof}
Let \[V_2(E):=\op{ker}\left(H^1(\Q_{\op{cyc}, \eta_p}, E[p])\longrightarrow H^1(\op{I}_{\eta_p}, \widetilde{E}[p])\right),\] viewed as an $\Omega$-submodule of $V(E)$. 
\begin{lemma} \label{V2 corank lemma} 
    As an $\Omega$-module, we have that
    \[V_2(E)\simeq \Omega^\vee \oplus W'\]
    for a finite submodule $W'$
\end{lemma}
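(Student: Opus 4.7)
The plan is to exploit the good ordinary reduction filtration
\[0 \to \widehat{E}[p] \to E[p] \to \widetilde{E}[p] \to 0\]
of $\op{G}_p$-modules (where $\widehat{E}[p]$ and $\widetilde{E}[p]$ each have order $p$, and $\widetilde{E}[p]$ is unramified). Taking $\op{G}_{\Q_{\op{cyc},\eta_p}}$-cohomology produces a connecting map $\iota: H^1(\Q_{\op{cyc},\eta_p}, \widehat{E}[p]) \to H^1(\Q_{\op{cyc},\eta_p}, E[p])$, and I would first observe that $\op{image}(\iota)\subseteq V_2(E)$: by exactness, any class in this image maps to zero already in $H^1(\Q_{\op{cyc},\eta_p}, \widetilde{E}[p])$, and a fortiori in $H^1(\op{I}_{\eta_p}, \widetilde{E}[p])$.

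Next, I would show that the cokernel $V_2(E)/\op{image}(\iota)$ is finite. For any $c \in V_2(E)$, its image $\bar c \in H^1(\Q_{\op{cyc},\eta_p}, \widetilde{E}[p])$ restricts trivially to $\op{I}_{\eta_p}$, so $\bar c$ lies in the unramified subgroup $H^1_{\op{unr}}(\Q_{\op{cyc},\eta_p}, \widetilde{E}[p])$. By inflation--restriction, this unramified subgroup equals the cohomology of the procyclic group $\op{Gal}(\Q_{\op{cyc},\eta_p}^{\op{unr}}/\Q_{\op{cyc},\eta_p})$ acting on the finite module $\widetilde{E}[p]^{\op{I}_{\eta_p}}=\widetilde{E}[p]$, which is $\widetilde{E}[p]/(\op{Frob}-1)\widetilde{E}[p]$, hence finite. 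Thus $V_2(E)/\op{image}(\iota)$ injects into a finite group.

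Third, I would compute the $\Omega$-corank of $\op{image}(\iota)$. The kernel of $\iota$ is the image of the finite module $H^0(\Q_{\op{cyc},\eta_p}, \widetilde{E}[p])$, so $\op{image}(\iota)$ has the same $\Omega$-corank as $H^1(\Q_{\op{cyc},\eta_p}, \widehat{E}[p])$. Repeating the Kummer sequence argument used in the proof of Lemma \ref{lem:local}(2) with $\widehat{E}$ in place of $E$, this corank equals the $\Lambda$-corank of $H^1(\Q_{\op{cyc},\eta_p}, \widehat{E}[p^\infty])$, which is $1$ by \cite[Proposition 1, p.109]{Greenberpadic}. Combining the three steps gives $\op{corank}_\Omega V_2(E)=1$, and since $\Omega=\F_p\llbracket T\rrbracket$ is a discrete valuation ring, the structure theorem applied to the Pontryagin dual yields the desired decomposition $V_2(E)\simeq \Omega^\vee\oplus W'$ with $W'$ finite.

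The main obstacle is the corank computation in the third step: one must carefully transfer Greenberg's $\Lambda$-corank result for $H^1(\Q_{\op{cyc},\eta_p}, \widehat{E}[p^\infty])$ down to the mod-$p$ statement via the Kummer sequence, tracking that the extraneous terms $\widehat{E}(\Q_{\op{cyc},\eta_p})[p^\infty]/p$ and $H^1(\Q_{\op{cyc},\eta_p}, \widehat{E}[p^\infty])[p]/(\text{corank-1 part})$ are indeed finite, exactly as in Lemma \ref{lem:local}(2).
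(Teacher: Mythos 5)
Your argument is correct, but it takes a genuinely different route from the paper's. The paper works at the level of $E[p^\infty]$: it compares $V_2(E)$ with the $p$-torsion of the module $L_{\eta_p}=\op{ker}\left(H^1(\Q_{\op{cyc},\eta_p},E[p^\infty])\to H^1(\op{I}_{\eta_p},\widetilde{E}[p^\infty])\right)$ via a snake-lemma diagram, invokes the identification $L_{\eta_p}[p]\cong (E(\Q_{\op{cyc},\eta_p})\otimes \Q_p/\Z_p)[p]$ from \cite{greenberg-vatsal}, and obtains the corank by bookkeeping: $H^1(\Q_{\op{cyc},\eta_p},E[p^\infty])$ has $\Lambda$-corank $2$ and the quotient by $E(\Q_{\op{cyc},\eta_p})\otimes\Q_p/\Z_p$ has corank $1$, so the relevant piece has corank $2-1=1$. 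You instead stay at the residual level and use the ordinary filtration $0\to\widehat{E}[p]\to E[p]\to\widetilde{E}[p]\to 0$: the image of $H^1(\Q_{\op{cyc},\eta_p},\widehat{E}[p])$ lies in $V_2(E)$ with finite quotient (because the kernel of restriction to inertia on $H^1(\Q_{\op{cyc},\eta_p},\widetilde{E}[p])$ is the $H^1$ of the procyclic group $\op{Gal}(\Q_{\op{cyc},\eta_p}^{\op{ur}}/\Q_{\op{cyc},\eta_p})$ with finite coefficients, hence finite) and has finite kernel, so everything reduces to a single application of Greenberg's local corank formula to the corank-one module $\widehat{E}[p^\infty]$. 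This buys you independence from the Greenberg--Vatsal identification of $L_{\eta_p}[p]$ and the auxiliary corank-$1$ statement for the quotient $H^1/(E\otimes\Q_p/\Z_p)$; the cost is that you need \cite[Proposition 1]{Greenberpadic} in its general form, for $\widehat{E}[p^\infty]$ rather than $E[p^\infty]$, which is indeed how it is stated. One point you share with the paper's own proof of Lemma~\ref{lem:local}(2) and leave equally implicit: passing from the $\Lambda$-corank of $H^1(\Q_{\op{cyc},\eta_p},\widehat{E}[p^\infty])$ to the $\Omega$-corank of its $p$-torsion requires that this local $H^1$ be divisible (equivalently, that its dual have trivial $\mu$-invariant), which follows from the vanishing of $H^2$ over the local cyclotomic tower; since the paper glosses over the same step, this is not a gap relative to its standard. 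A minor slip only of terminology: the map $H^1(\Q_{\op{cyc},\eta_p},\widehat{E}[p])\to H^1(\Q_{\op{cyc},\eta_p},E[p])$ is the functorial map induced by the inclusion, not a connecting homomorphism; the argument is unaffected.
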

\begin{proof}
Let $\phi\colon H^1(\Q_{\op{cyc},\eta_p},E[p])\longrightarrow H^1(\op{I}_{\eta_p}, \widetilde{E}[p])$. We obtain a commutative diagram
\begin{equation}
\begin{tikzcd}[column sep = small, row sep = large]
0\arrow{r} & V_2(E)\arrow{r}{\phi} \arrow{d}{\phi'} & H^1\left(\Q_{\op{cyc},\eta_p},E[p]\right) \arrow{r} \arrow{d}{\iota} & \textup{Im}(\phi) \arrow{d} \arrow{r}&0\\
0\arrow{r} & L_{\eta_p}[p] \arrow{r} & H^1\left(\Q_{\op{cyc},\eta_p},E[p^\infty]\right)[p] \arrow{r} &H^1(I_{\eta_p},\widetilde{E}[p^\infty])&,
\end{tikzcd}\end{equation}
here $I_{\eta_p}$ is the inertia subgroup at $\eta_p$ and $L_{\eta_p}$ is the canonical module making the bottom row exact. 
By the long exact sequence in cohomology, $\iota$ is surjective and has finite kernel. Furthermore, the right vertical map has kernel that is a subgroup of 
\[H^0(I_{\eta_p},\widetilde{E}[p^\infty])/pH^0(I_{\eta_p},\widetilde{E}[p^\infty])=0.\]
By the snake lemma $\phi'$ is surjective and has finite kernel. It therefore suffices to determine the $\Omega$-corank of $L_{\eta_p}[p]$.
By \cite[page 42]{greenberg-vatsal} we know that $L_{\eta_p}[p]\cong (E(\Q_{\op{cyc},\eta_p}) \otimes \Q_p/\Z_p)[p]$.
    By \cite{greenberg-vatsal} the quotient $H^1(\Q_{\op{cyc}\eta_p},E[p^\infty])/E(\Q_{\op{cyc},\eta_p})\otimes \Q_p/\Z_p$ has $\Lambda$-corank $1$. By \cite[Proposition 1]{Greenberpadic} $H^1(\Q_{\op{cyc},\eta_p},E[p^\infty])$ has $\Lambda$-corank $2$. As $E(\Q_{\op{cyc},\eta_p})\otimes \Q_p/\Z_p$ is $\Z_p$-divisible it follows that 
    \[\Omega\textup{-corank}((E(\Q_{\op{cyc},\eta_p})\otimes \Q_p/\Z_p)[p])=\Lambda\textup{-corank}(E(\Q_{\op{cyc},\eta_p})\otimes \Q_p/\Z_p)=2-1=1.\]
    As $\Omega$ is a principal ideal domain the desired claim follows.
\end{proof}
We may write
\[V(E)=\left(\Omega^\vee a \oplus \Omega^\vee b\right) \oplus \left(\bigoplus_{i=1}^n \left( \frac{\Omega}{(T^{m_i})} e_i \oplus \frac{\Omega}{(T^{m_i})} f_i\right)\right), \]
where $\left(\Omega^\vee a \oplus \Omega^\vee b\right)=H^1(\Q_{\op{cyc},\eta_p},E[p])$. 
The $\Omega$-equivariant pairing satisfies 
\[
    (a, b)=(e_i, f_i)=1\text{ and } (b,a)=(f_i, e_i)=1,
\]
and all other pairings vanish. We note that for integers $k_1, k_2\in [0, m_i)$ and \[\delta_{k_1, k_2}=\begin{cases} &1 \text{ if }k_1=k_2;\\
&0 \text{ if }k_1\neq k_2.\\\end{cases}\]
\[(\gamma^{k_1} a, \gamma^{k_2} b)=\delta_{k_1, k_2}(a,b)\text{ and }(\gamma^{k_1} e_i, \gamma^{k_2} f_i)=\delta_{k_1, k_2}(a,b).\]

\begin{lemma}\label{str of maxl iso}
    Let $W$ be a maximal isotropic submodule of $V(E)$. Then, $W$ decomposes as 
    \[W=\Omega^\vee c\oplus W_{\op{tors}},\] where $\Omega^\vee c$ is contained in $\Omega^\vee a\oplus \Omega^\vee b$ and $W_{\op{tors}}$ is a torsion $\Omega$-module.
\end{lemma}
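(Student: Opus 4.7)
The plan is to apply the structure theorem for cofinitely generated discrete $\Omega$-modules to $W$, use $T$-divisibility to trap the maximal divisible submodule inside $\Omega^\vee a\oplus\Omega^\vee b$, and then pin down its $\Omega$-corank via the maximal isotropy condition together with the explicit form of the pairing recorded above.

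First I would note that $V(E)$ is cofinitely generated as an $\Omega$-module by Lemma~\ref{lem:local}, hence so is the $\Omega$-submodule $W$ (since $\Omega$ is Noetherian, quotients of finitely generated $\Omega$-modules are finitely generated). Because $\Omega=\F_p\llbracket T\rrbracket$ is a discrete valuation ring, the structure theorem for cofinitely generated modules over a DVR gives a splitting
\[
W\;\cong\;W_{\op{div}}\oplus W_{\op{tors}},
\]
where $W_{\op{div}}\cong(\Omega^\vee)^{r}$ is the maximal divisible $\Omega$-submodule of $W$ and $W_{\op{tors}}$ is a finite (in particular torsion) $\Omega$-module. Writing $V_{\op{tors}}:=\bigoplus_{i=1}^{n}\bigl(\Omega/(T^{m_i})\,e_i\oplus\Omega/(T^{m_i})\,f_i\bigr)$ for the torsion summand of $V(E)$, observe that $V_{\op{tors}}$ is annihilated by $T^N$ with $N:=\max_i m_i$. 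Since $W_{\op{div}}$ is $T$-divisible one has $T^N W_{\op{div}}=W_{\op{div}}$, so its projection to $V_{\op{tors}}$ lies in $T^N V_{\op{tors}}=0$. Therefore $W_{\op{div}}\subseteq\Omega^\vee a\oplus\Omega^\vee b$.

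It then remains to show that $r=1$, which I would do by ruling out $r=0$ and $r=2$. If $r=2$, then $W_{\op{div}}$ is a divisible $\Omega$-submodule of $\Omega^\vee a\oplus\Omega^\vee b$ of full corank, so the quotient is simultaneously divisible and of finite length, hence zero; this forces $W_{\op{div}}=\Omega^\vee a\oplus\Omega^\vee b$, contradicting the isotropy of $W$ since $(a,b)=1$. If $r=0$, then $W$ is finite, and the non-degenerate pairing induces an injection
\[
V(E)/W^{\perp}\;\hookrightarrow\;\op{Hom}(W,\F_p);
\]
maximal isotropy gives $W=W^\perp$, so the target, and hence $V(E)$ itself, would have to be finite, contradicting the presence of the infinite module $\Omega^\vee a\oplus\Omega^\vee b$. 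Hence $r=1$, and denoting the resulting $\Omega^\vee$-summand by $\Omega^\vee c$ (viewed as a submodule of $\Omega^\vee a\oplus\Omega^\vee b$ in the notation of the decomposition of $V(E)$) yields the desired decomposition $W=\Omega^\vee c\oplus W_{\op{tors}}$.

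The only slightly delicate point is the clean splitting of $W$ into a divisible part and a finite part, which is precisely the content of the structure theorem over the DVR $\Omega$. Once this splitting is granted, the remaining corank bookkeeping is elementary given the explicit pairing values recorded just before the lemma.
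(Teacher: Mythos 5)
Your proof is correct, but it reaches the conclusion by a genuinely different route than the paper. The paper simply asserts that $W$ has $\Omega$-corank $1$, writes a cogenerator $c'=c+\sum_i c_ie_i+\sum_i d_if_i$, normalizes $W_{\op{tors}}$ (``without loss of generality'' it is $\bigoplus_i \Omega/(T^{m_i})e_i$), absorbs the $e_i$-components into $W_{\op{tors}}$, and then kills the $f_i$-components by computing $(c',\gamma^k e_i)$ with the explicit pairing relations and isotropy. You instead locate the divisible part first: since $W_{\op{div}}$ is $T$-divisible and the torsion summand of $V(E)$ is killed by $T^N$, the projection of $W_{\op{div}}$ to $V(E)_{\op{tors}}$ vanishes, so $W_{\op{div}}\subseteq \Omega^\vee a\oplus\Omega^\vee b$ with no pairing computation and no normalization of $W_{\op{tors}}$ at all; you then use isotropy only to exclude $r=2$ (via $(a,b)=1$) and maximality only to exclude $r=0$ (via $V(E)/W^\perp\hookrightarrow \op{Hom}(W,\F_p)$). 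This buys two things: it avoids the paper's somewhat delicate ``WLOG'' on the shape of $W_{\op{tors}}$, and it actually proves the corank-$1$ claim that the paper dismisses as ``clear.'' Two cosmetic points you may as well make explicit: $r\le 2$ follows from the containment $W_{\op{div}}\subseteq \Omega^\vee a\oplus\Omega^\vee b$ by monotonicity of corank, and in the $r=2$ case the quotient $(\Omega^\vee a\oplus\Omega^\vee b)/W_{\op{div}}$ is finite because corank is additive in short exact sequences of cofinitely generated $\Omega$-modules; being also divisible and $T$-power torsion, it vanishes, which is exactly the step you invoke.
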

\begin{proof}
    Clearly, $W$ has $\Omega$-corank equal to $1$. Thus, 
    \[W^\vee \cong \Omega \oplus W',\] where $W'$ is a $\Omega$-torsion module. Thus, $W$ decomposes as 
    \[W=\Omega^\vee c'\oplus W_{\op{tors}},\]
where $c'= c+\sum_{i=1}^n c_i e_i+\sum_{j=1}^n d_i f_i$, where $c\in \Omega^\vee a\oplus \Omega^\vee b$ and $c_i, d_i\in \Omega/(T^{m_i})$. Note that $W_{\op{tors}}$ is a maximal isotropic $\Omega$-submodule of 
\[V(E)_{\op{tors}}=\left(\bigoplus_{i=1}^n \left( \frac{\Omega}{(T^{m_i})} e_i \oplus \frac{\Omega}{(T^{m_i})} f_i\right)\right). \] Without loss of generality, 
\[W_{\op{tors}}=\bigoplus_{i=1}^n \frac{\Omega}{(T^{m_i})} e_i.\]
Thus, after subtracting $\sum_{i} c_i e_i$, we can assume that 
\[c'=c+\sum_{j=1}^n d_j f_i.\] On the other hand, $(c', \gamma^k e_i)=0$ for all $k$ and $i$. Since 
\[(c',\gamma^k e_i)=(d_if_i,\gamma^ke_i)\] for all $k$, it follows that $d_i=0$ and thus, $c'=c$. This completes the proof.
\end{proof}

Recall that from Lemma \ref{V1 is maxl iso}, $V_1(E)$ is maximal isotropic and thus in view of Lemma \ref{str of maxl iso}, we have that $V_1(E)=M_1\oplus V_1(E)_{\op{tors}}$, where $M_1\simeq \Omega^\vee$ and is contained in $\Omega^\vee a\oplus \Omega^\vee b$. On the other hand, by Lemma \ref{V2 corank lemma}, $V_2(E)=M_2'\oplus V_2(E)_{\op{tors}}$, where $M_2'\simeq \Omega^\vee$. We let $M_2$ be the projection of $M_2'$ onto the summand $\Omega^\vee a\oplus \Omega^\vee b$.
\begin{lemma}\label{boring lemma 1}
    With respect to notation above, the following are equivalent:
    \begin{enumerate}
        \item the $\mu$-invariant of $\op{Sel}_{p^\infty}(E/\Q_{\op{cyc}})$ is $0$;
        \item the $\mu$-invariant of $R_{p^\infty}(E/\Q_{\op{cyc}})$ is $0$ and $V_1(E)\cap V_2(E)$ is finite.
    \end{enumerate}
\end{lemma}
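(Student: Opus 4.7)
The plan is to produce a short exact sequence
\[0 \longrightarrow R(E[p]/\Q_{\op{cyc}}) \longrightarrow \op{Sel}^{\op{Gr}}(E[p]/\Q_{\op{cyc}}) \longrightarrow V_1(E) \cap V_2(E) \longrightarrow 0,\]
and then conclude by chaining Lemma~\ref{mu=0 criterion lemma}(2) and Proposition~\ref{residual fine selmer finite implies conjecture A} onto this sequence.

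First, I would restrict the map $\theta : H^1(\Q_\Sigma/\Q_{\op{cyc}}, E[p]) \to V(E)$ to the Greenberg Selmer group. By construction, $\op{ker}(\theta) = R(E[p]/\Q_{\op{cyc}})$ and $\op{image}(\theta) = V_1(E)$. Unwinding the definitions of the local Greenberg conditions, a class $f$ lies in $\op{Sel}^{\op{Gr}}(E[p]/\Q_{\op{cyc}})$ precisely when $\op{res}_v(f) = 0$ for every prime $v \mid \ell$ with $\ell \in \Sigma \setminus \{p\}$, and $\op{res}_{\eta_p}(f) \in V_2(E)$. Regarding $V_2(E)$ as an $\Omega$-submodule of $V(E)$ concentrated in the $\eta_p$-factor, these conditions together amount to $\theta(f) \in V_2(E)$, so
\[\theta\bigl(\op{Sel}^{\op{Gr}}(E[p]/\Q_{\op{cyc}})\bigr) = V_1(E) \cap V_2(E),\]
with kernel still $R(E[p]/\Q_{\op{cyc}})$. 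This yields the desired short exact sequence; surjectivity onto $V_1(E) \cap V_2(E)$ is immediate, since any $x$ in the intersection equals $\theta(f)$ for some $f$ (by membership in $V_1(E)$), and the vanishing of the components of $x$ at all $v \neq \eta_p$, together with $\op{res}_{\eta_p}(f) \in V_2(E)$, places $f$ in $\op{Sel}^{\op{Gr}}(E[p]/\Q_{\op{cyc}})$.

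The short exact sequence shows that $\op{Sel}^{\op{Gr}}(E[p]/\Q_{\op{cyc}})$ is finite if and only if both $R(E[p]/\Q_{\op{cyc}})$ and $V_1(E) \cap V_2(E)$ are finite. By Lemma~\ref{mu=0 criterion lemma}(2), the finiteness of $\op{Sel}^{\op{Gr}}(E[p]/\Q_{\op{cyc}})$ is equivalent to $\mu_p(E) = 0$. By Proposition~\ref{residual fine selmer finite implies conjecture A}, the finiteness of $R(E[p]/\Q_{\op{cyc}})$ is equivalent to $R_{p^\infty}(E/\Q_{\op{cyc}})$ being cofinitely generated as a $\Z_p$-module; since $R_{p^\infty}(E/\Q_{\op{cyc}})$ is already a cofinitely generated cotorsion $\Lambda$-module, this is in turn equivalent to the vanishing of its $\mu$-invariant. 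Assembling these equivalences yields $(1)\Leftrightarrow(2)$.

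I do not anticipate serious obstacles: the argument is essentially bookkeeping once the exact sequence is in place. The point requiring the most care is the identification of the image of $\op{Sel}^{\op{Gr}}(E[p]/\Q_{\op{cyc}})$ under $\theta$ with $V_1(E)\cap V_2(E)$, and in particular noting that the local Greenberg conditions at primes $\ell \neq p$ are the \emph{strict} conditions (the restriction vanishes identically), which is precisely what forces $\theta(f)$ to lie in the $\eta_p$-summand $V_2(E)$ rather than in some larger submodule of $V(E)$.
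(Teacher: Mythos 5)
Your proposal is correct and follows essentially the same route as the paper: both identify $\theta\bigl(\op{Sel}^{\op{Gr}}(E[p]/\Q_{\op{cyc}})\bigr)=\op{image}(\theta)\cap V_2(E)=V_1(E)\cap V_2(E)$ with kernel $R(E[p]/\Q_{\op{cyc}})$, and then conclude by combining Lemma~\ref{mu=0 criterion lemma}(2) with Proposition~\ref{residual fine selmer finite implies conjecture A}. Your write-up merely spells out the local-condition bookkeeping behind the identification in more detail than the paper does.
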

\begin{proof}
    Note that \[\theta\left(\op{Sel}^{\op{Gr}}(E[p]/\Q_{\op{cyc}})\right)=\op{image}(\theta)\cap V_2(E)=V_1(E)\cap V_2(E).\] Thus, the residual Selmer group $\op{Sel}^{\op{Gr}}(E[p]/\Q_{\op{cyc}})$ is finite if and only if the $\op{ker}\theta$ and $V_1(E)\cap V_2(E)$ are both finite. Recall that the $\mu$-invariant of $\op{Sel}_{p^\infty}(E/\Q_{\op{cyc}})$ is $0$ if and only if $\op{Sel}^{\op{Gr}}(E[p]/\Q_{\op{cyc}})$ is finite. On the other hand, the kernel of $\theta$ is $R(E[p]/\Q_{\op{cyc}})$ which is finite if and only if the $\mu$-invariant of $R_{p^\infty}(E/\Q_{\op{cyc}})$ is $0$.
\end{proof}

As an immediate consequence of the above lemma we obtain
\begin{proposition}\label{Propn on M1 and M2}
\label{prop}
    With respect to notation above, the following assertions hold:
    \begin{enumerate}
        \item Suppose that $M_1\cap M_2$ is finite. Then it follows that $V_1(E)\cap V_2(E)$ is finite.
        \item Suppose that the fine Selmer group $R_{p^\infty}(E/\Q_{\op{cyc}})$ has $\mu$-invariant equal to $0$ and that $M_1\cap M_2$ is finite. Then, the $\mu$-invariant of $\op{Sel}_{p^\infty}(E/\Q_{\op{cyc}})$ is $0$.
    \end{enumerate}
\end{proposition}
\begin{proof}
    Part (1) is clear and left to the reader. Part (2) then follows from Lemma \ref{boring lemma 1}.
\end{proof}

\section{A Heuristic for $\Omega$-modules}\label{s 4}
Recall that $\Omega=\mathbb{F}_p\llbracket T\rrbracket$ and that $\Omega^\vee$ denotes its Pontryagin dual. Let $M_1,M_2$ be the submodules of $(\Omega^\vee)^2$ defined in the previous section. We would like to compute the probability that $M_1\cap M_2$ is finite. Let $N_i\subset \Omega^2$ be the submodule such that
\[M_i^\vee=\Omega^2/N_i.\]
For any natural number $n\ge 1$ we define $\Omega_n\colon =\Omega/(T^n)$.
\begin{definition}
    We call a cyclic submodule $N\subset \Omega^2$ (resp. $N\subset \Omega_n^2$) maximal if it is not contained in $T\Omega^2$ (resp. $T\Omega_n^2$). 
\end{definition}
\begin{remark}
     As $M_1$ and $M_2$ are isomorphic to $\Omega^\vee$, the modules $N_1$ and $N_2$ are maximal submodules of $\Omega^2$.  
\end{remark}
\begin{definition}
    Let $\mathcal{M}_n$ be the space of pairs of maximal submodules $(\bar{N}_1, \bar{N}_2)\in \Omega_n^2\times \Omega_n^2$. We define $\mathbb{P}_n$ to be the uniform distribution on $\mathcal{M}_n$. Let $\mathcal{A}_n$ be the power set of $\mathcal{M}_n$. Let $\mathcal{M}$ be the set of pairs $(N_1, N_2)$ of maximal submodules $N_i\in \Omega^2$. Let $\pi_n\colon \mathcal{M}\to \mathcal{M}_{n}$ and $\pi_{m,n}\colon \mathcal{M}_m\to \mathcal{M}_n$ for $m\ge n$ be the natural projections. Then we have $\mathcal{M}=\varprojlim_n \mathcal{M}_n$ and $((\mathcal{M}_n,\mathcal{A}_,\mathbb{P}_n),\pi_{m,n})$ forms a projective system of measurable spaces (\cite[Definition 2.2]{pinter}). Let $(\mathcal{M},\mathcal{A},\mathbb{P})$ be the inverse limit of this system (see \cite[Theorem 3.2]{pinter} for the existence).    
\end{definition}
\begin{remark}
   For any subset $X\subset \mathcal{M}$ we define $\mathbb{P}^*(X)=\inf_{X\subset B\in \mathcal{A}}\mathbb{P}(B)$. If $X\in \mathcal{A}$, then $\mathbb{P}(X)=\mathbb{P}^*(X)$. Note that $\Prob^*$ is an outer measure but not a measure.
\end{remark}

The aim of this section is to prove that $\{(N_1,N_2)\mid N_1\cap N_2=0\}$ is a measurable set and that $\Prob((N_1,N_2)\mid N_1\cap N_2=0)=1$. Applying this result to the modules $N_1$ and $N_2$ above, we obtain that $N_1+N_2=\Omega^2$, i.e. $(M_1\cap M_2)^\vee=\Omega^2/(N_1+N_2)=0$. In particular, $V_1(E)\cap V_2(E)$ is finite by Proposition \ref{prop}.

\begin{lemma}
\label{number of maximal submodules}
    With respect to notation above, there are $p^{n-1}(p+1)$ maximal submodules in $\Omega_n^2$.
\end{lemma}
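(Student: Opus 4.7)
The plan is a straightforward orbit-counting argument. I would first identify a maximal cyclic submodule with its set of possible generators, then count those generators and divide by the appropriate unit-orbit size.

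First I would observe that a cyclic submodule $N \subset \Omega_n^2$ is of the form $N = \Omega_n \cdot (a,b)$ for some $(a,b) \in \Omega_n^2$, and the condition that $N$ is not contained in $T\Omega_n^2$ is equivalent to the condition that the generator $(a,b)$ can be chosen outside $T\Omega_n^2$, i.e.\ with at least one of $a,b$ a unit in $\Omega_n$. (Indeed, if every generator lay in $T\Omega_n^2$ then $N \subset T\Omega_n^2$; conversely, if $N$ has a generator $(a,b) \notin T\Omega_n^2$, then $N \not\subset T\Omega_n^2$ since $(a,b) \in N$.)

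Next I would establish the key uniqueness lemma: two elements $(a,b), (a',b') \in \Omega_n^2 \setminus T\Omega_n^2$ generate the same cyclic submodule if and only if $(a',b') = u(a,b)$ for some unit $u \in \Omega_n^\times$. The nontrivial direction uses that $\Omega_n$ is a local ring with maximal ideal $(T)$: if $(a',b') = u(a,b)$ and $(a,b) = v(a',b')$ then $uv \cdot (a,b) = (a,b)$; picking a coordinate of $(a,b)$ that is a unit (which exists by maximality) forces $uv = 1$, hence $u \in \Omega_n^\times$. This identifies the set of maximal cyclic submodules with the quotient
\[
\bigl(\Omega_n^2 \setminus T\Omega_n^2\bigr)\big/\Omega_n^\times .
\]

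Finally I would do the counting. Since $\Omega_n = \mathbb{F}_p[T]/(T^n)$ has $p^n$ elements and $\Omega_n^\times$ consists of the elements with nonzero constant term, we have $|\Omega_n^\times| = p^{n-1}(p-1)$ and $|T\Omega_n| = p^{n-1}$. Therefore
\[
\bigl|\Omega_n^2 \setminus T\Omega_n^2\bigr| = p^{2n} - p^{2n-2} = p^{2n-2}(p^2-1),
\]
and the $\Omega_n^\times$-action on this set is free by the uniqueness lemma, giving
\[
\frac{p^{2n-2}(p^2-1)}{p^{n-1}(p-1)} = p^{n-1}(p+1)
\]
orbits, as claimed. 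The only step requiring any care is the uniqueness lemma — specifically the observation that maximality of $(a,b)$ (a coordinate being a unit) kills the stabilizer — and that is really just the fact that $\Omega_n$ is local. Everything else is bookkeeping.
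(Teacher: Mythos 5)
Correct, and at its core this is the same count as the paper's: both arguments ultimately divide the number of elements of $\Omega_n^2$ lying outside $T\Omega_n^2$ by $\#\Omega_n^\times$. The only differences are that the paper reaches the generator count $p^{2n}-p^{2n-2}$ by an (essentially unnecessary) induction on $n$ while you get it directly, and that you spell out the uniqueness/free-action step (two generators of a maximal cyclic submodule differ by a unit, with trivial stabilizer because some coordinate is a unit in the local ring $\Omega_n$), which the paper's final division implicitly relies on.
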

\begin{proof}
    We prove the claim by induction on $n$ starting with $n=1$. In this case, we are counting the $\mathbb{F}_p$-lines of $\mathbb{F}_p^2$. We  have $p^2-1$ elements that generate these lines and $p-1$ units in $\mathbb{F}_p$. Thus, we obtain $(p+1)$ maximal submodules when $n=1$. Assume now that $n\geq 1$ and by induction, that we have already proven the following assertions.
    \begin{itemize}
        \item There are $p^{2n}-p^{2(n-1)}$ elements in $\Omega_n^2$ generating a maximal submodule. 
        \item There are $(p+1)p^{n-1}$ maximal submodules in $\Omega^2_n$.
    \end{itemize}
    Each element in $\Omega_n^2$ has $p^2$ preimages in $\Omega_{n+1}^2$, and an element in $\Omega_{n+1}^2$ generates a maximal submodule if and only if its restriction to $\Omega_n^2$ generates one. Thus, there are $p^{2(n+1)}-p^{2n}$ elements generating a maximal submodule in $\Omega^2_{n+1}$. This implies that there are
    \[\frac{p^{2n}(p^2-1)}{\#\Omega_{n+1}^\times}=\frac{p^{2n}(p^2-1)}{(p-1)p^{n}}=p^n(p+1)\]
    maximal submodules in $\Omega_{n+1}^2$
\end{proof}
\begin{lemma}
    \label{max-cyclic}
    Let $N_1$ and $N_2$ be maximal submodules. Then one of the following is true
    \begin{itemize}
        \item $N_1\cap N_2=0$.
        \item $N_1=N_2$.
    \end{itemize}
\end{lemma}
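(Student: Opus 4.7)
The plan is to exploit the fact that $\Omega = \F_p\llbracket T\rrbracket$ is a discrete valuation ring with uniformizer $T$, so that elements of $\Omega^2$ carry a natural coordinatewise $T$-adic valuation $v_T$. First I would record the elementary characterization that a cyclic submodule $N = \Omega v \subset \Omega^2$ with $v = (a,b)$ is maximal in the sense defined above precisely when $\min(v_T(a), v_T(b)) = 0$, i.e.\ at least one coordinate of $v$ is a unit of $\Omega$ (with the convention $v_T(0) = \infty$). Indeed, the complement $v \in T\Omega^2$ is equivalent to both coordinates being divisible by $T$. Write $N_i = \Omega v_i$ with $v_i = (a_i, b_i)$ a primitive generator in this sense.

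Next I would dispose of the nontrivial case $N_1 \cap N_2 \neq 0$. Choose a nonzero element $w \in N_1 \cap N_2$ and write $w = \alpha v_1 = \beta v_2$ for some $\alpha, \beta \in \Omega$. Since $v_1, v_2$ are both nonzero (a zero generator would place $N_i$ inside $T\Omega^2$, contradicting maximality) and $\Omega$ is a domain, both $\alpha$ and $\beta$ must be nonzero. Now I would compare the minimum $T$-adic valuations of the coordinates of the two expressions for $w$: on the left this equals $v_T(\alpha) + \min(v_T(a_1), v_T(b_1)) = v_T(\alpha)$ by the primitivity of $v_1$, and symmetrically on the right it equals $v_T(\beta)$. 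Hence $v_T(\alpha) = v_T(\beta)$, so the ratio $u := \alpha/\beta$ lies in $\Omega^\times$. Substituting back yields $v_1 = u^{-1} \cdot(\alpha/\beta)\cdot v_2$ — more cleanly, $\alpha v_1 = \beta v_2$ with $\alpha, \beta$ of equal valuation gives $v_1 \in \Omega v_2$ and $v_2 \in \Omega v_1$. Therefore $N_1 = N_2$.

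The argument is entirely formal once the DVR structure of $\Omega$ is in hand; the only mild subtlety is to keep track of the case in which one of the coordinates $a_i$ or $b_i$ vanishes, which is handled uniformly by the $v_T(0) = \infty$ convention together with the observation that primitivity forces at least one coordinate to be a unit. Consequently the main obstacle here is essentially just notational bookkeeping, and no deeper Iwasawa-theoretic input is required.
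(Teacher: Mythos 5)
Your proof is correct and follows essentially the same route as the paper: both arguments write a nonzero common element as $\alpha v_1=\beta v_2$, factor out powers of the uniformizer $T$ using that $\Omega$ is a (local) PID and that $\Omega^2$ is $T$-torsion-free, and then use maximality (i.e.\ the generators are not in $T\Omega^2$) to force the leftover power of $T$ to vanish, giving $N_1=N_2$. Your valuation-theoretic bookkeeping with primitive generators is just a more explicit phrasing of the paper's step ``$T^aN_1=T^bN_2$, reduce to $a=0$, then $b=0$ by maximality,'' so no further comment is needed.
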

\begin{proof}
    Assume that $N_1\cap N_2\neq 0$. As $\Omega$ is a principal ideal domain, there exist non-negative integers $a,b$ such that 
    \[T^aN_1=T^bN_2.\]
    As $\Omega^2$ does not contain any $T$-torsion, we can assume that $a=0$. Thus, $N_1=T^bN_2$. As $N_1$ is maximal, $N_1$ is not contained in $T\Omega^2$. thus, $b=0$ and we obtain indeed $N_1=N_2$. 
\end{proof}
\begin{lemma}
    The set $\{(N_1,N_2)\mid N_1\cap N_2=0\}$ is measurable.
\end{lemma}
\begin{proof}
    As the union of countably many measurable sets is measurable it suffices to show that 
    \[\{(N_1,N_2)\mid N_1\cap N_2=0\}=\cup_n \{\pi_n(N_1)\neq \pi_n(N_2)\}.\]
By Lemma \ref{max-cyclic}, the right hand side is contained in the left hand side. Assume that there is a pair $(N_1,N_2)$ such that $\pi_n(N_1)= \pi_n(N_2)$ for all $n$. Let $x\in N_1$. For each $i$ we can find a decomposition
\[x=n_i+T^iy_i,\quad n_i\in N_2.\]
Let $i<j$, we obtain
\[n_i-n_j=T^i(T^{j-i}y_j-y_i). \]
Thus $(n_i)_{i\in \N}$ is a Cauchy sequence. As $\Omega$ is compact there is a convergent subsequence $(n_{i_k})$. Let $n$ be the limit. We obtain
\[x=n_{i_k}+T^{i_k}y_{i_k}\rightarrow n\in N_2.\]
Therefore, $x\in N_2$. Changing the roles of $N_1$ and $N_2$ proves $N_1=N_2$. Thus, if $N_1\neq N_2$, there exists an $n$ such that $\pi_N(N_1)\neq \pi_n(N_2)$, which implies that 
\[\{(N_1,N_2)\mid N_1\cap N_2=0\}\subset \cup_n \{\pi_n(N_1)\neq \pi_n(N_2)\}.\]

\end{proof}

\begin{theorem}
\label{thm:probability}
    Let $N_1,N_2$ be cyclic submodules of $\Omega^2$. Then we have
    \[\Prob(\{(N_1, N_2)\in \mathcal{M}\mid N_1\cap N_2=0\})=1.\]
    %Furthermore,
   % \[\sup_{\substack{B\subset \{(N_1, N_2)\in \mathcal{M}\mid N_1\cap N_2=0\}\\B\in \mathcal{A}}}\Prob(B)=1.\]
\end{theorem}
\begin{proof}
By Lemma \ref{max-cyclic} we obtain 
    \begin{align*}
        \Prob(N_1\cap N_2=0)=\Prob(N_1\neq N_2)\\
        \ge \Prob(\pi_{n}(N_1)\neq \pi_n(N_2)).\\
    \end{align*}
    Note that the condition $\pi_n(N_1)\neq \pi_n(N_2)$ actually produces a measurable set. Therefore, we find that 
    \[\begin{split} & \Prob(\pi_{n}(N_1)\neq \pi_n(N_2)) \\
    = & \Prob_n(\pi_{n}(N_1)\neq \pi_n(N_2)) \\
    = &1-\Prob_n(\pi_{n}(N_1)= \pi_n(N_2)).\end{split}\]
    Let $\mathcal{N}_n$ be the set of maximal submodules in $\Omega^2$. 
     By Lemma \ref{number of maximal submodules} we have
    \begin{align*}\Prob_n(\pi_{n}(N_1)=\pi_n(N_2))=\sum_{M\in \mathcal{N}_n}\Prob_n(\pi_n(N_1)=M)\Prob(\pi_n(N_2)=M)\\=\sum_{M\in \mathcal{N}_n}\left(\frac{1}{(p+1)p^{n-1}}\right)^2=\frac{1}{p^{n-1}(p+1)} .\end{align*}
    This implies that 
    \[\Prob(N_1\cap N_2=0)\ge 1-\frac{1}{(p+1)p^{n-1}}\]
    for all $n$. The left hand side of the above inequality does not depend on $n$, thus, letting $n$ tend to infinity gives the claim.

    %We have already seen that  \[\{(N_1, N_2)\in \mathcal{M}\mid N_1\cap N_2=0\}\supset \{(N_1,N_2)\mid \pi_n(N_1)\neq \pi_n(N_2)).\] The right hand side is clearly measurable and has measure $1-\frac{1}{(p+1)p^{n-1}}$ (as we computed above). Taking the suprenum over $n$ proves the second claim.
\end{proof}

We are now ready to give a proof of Theorem \ref{thm A}.
\begin{proof}[Proof of Theorem A]
    Let $N_1$ and $N_2$ be maximal cyclic submodules of $\Omega^2$. Then according to the Lemma \ref{max-cyclic}, $N_1\cap N_2\neq 0$ if and only if $N_1=N_2$. Thus in particular, this implies that if $N_1 \cap N_2$ is finite, then $N_1\cap N_2=0$. It then follows from Theorem \ref{thm:probability} that \[\mathbb{P}\left(\{(N_1, N_2)\in \mathcal{M}\mid N_1\cap N_2\text{ is infinite}\}\right)=0.\]
    Heuristic \ref{heuristic main} then implies that  \[\lim_{x\rightarrow\infty} \frac{\# \mathcal{T}_p(x)}{\# \mathcal{C}(x)}=0.\] Since we assume Conjecture A of Coates and Sujatha, it follows as a consequence of part (2) of Proposition \ref{Propn on M1 and M2} that if the $\mu$-invariant of $\op{Sel}_{p^\infty}(E/\Q_{\op{cyc}})$ is non-zero, then $V_1(E)\cap V_2(E)$ is infinite. Therefore, we have that $\mathcal{S}_p(x)\subseteq \mathcal{T}_p(x)$ and we deduce that  \[\lim_{x\rightarrow\infty} \frac{\# \mathcal{S}_p(x)}{\# \mathcal{C}(x)}=0,\] proving the result. 
\end{proof}

\bibliographystyle{alpha}
\bibliography{references}
\end{document}